\tikzset{help lines/.style={step=#1cm,very thin, color=gray},
help lines/.default=.5} 
\tikzset{thick grid/.style={step=#1cm,thick, color=gray},
thick grid/.default=1} 
\numberwithin{figure}{section}
\numberwithin{table}{section}
\theoremstyle{definition}
\theoremstyle{plain}
\newcommand{\thistheoremname}{}
\newtheorem*{genericthm*}{\thistheoremname}
\newenvironment{namedthm*}[1]
  {\renewcommand{\thistheoremname}{#1}%
   \begin{genericthm*}}
  {\end{genericthm*}}
\DeclareMathOperator{\ZZ}{\mathbb{Z}}
\newcommand{\lrabs}[1]{\left\lvert #1 \right\lvert}
\newcommand{\lrp}[1]{\left(#1\right)}
\newcommand{\lrb}[1]{\left[#1\right]}
\newtheorem{theorem}{Theorem}[section]
\newtheorem{corollary}[theorem]{Corollary}
\newtheorem{lemma}[theorem]{Lemma}
\newtheorem{conjecture}[theorem]{Conjecture}
\theoremstyle{definition}
\theoremstyle{definition}
\theoremstyle{remark}
\numberwithin{equation}{section}
\newcommand{\ben}{\begin{equation}}
\newcommand{\een}{\end{equation}}
\NewDocumentCommand{\sump}{e{_}}
 {%
  \DOTSB
  \mathop{\IfNoValueTF{#1}{\sump@{}}{\sump@{#1}}}%
  \nolimits
 }
\newcommand{\sump@}[1]{\mathpalette\sump@@{#1}}
\newcommand{\sump@@}[2]{%
  \ifx#1\displaystyle
    {\sump@display{#2}}%
  \else
    \sum@\nolimits'_{#2}%
  \fi
}
\newcommand{\sump@display}[1]{%
  \sbox\z@{$\m@th\displaystyle\sum@\nolimits'$}%
  \sbox\tw@{$\m@th\displaystyle\sum@\limits_{#1}$}%
  \sbox\@tempboxa{$\m@th\displaystyle'$}
  \mathop{\sum@\nolimits' \kern-\wd\@tempboxa}\limits_{#1}%
  \ifdim\wd\z@>\wd\tw@
    \kern\dimexpr\wd\z@-\wd\tw@\relax
  \fi
}
\newcommand{\nw}{{\textnormal{new}}}
\DeclareMathOperator{\Tr}{Tr}
\DeclareMathOperator{\sgn}{sgn}
\setlist[enumerate]{leftmargin=*,widest=0}
\setlist[itemize]{leftmargin=*,widest=0}
\def\subsection{\@startsection{subsection}{2}%
  \z@{.5\linespacing\@plus.7\linespacing}{.3\linespacing}%
  {\normalfont\bfseries}}
\def\subsubsection{\@startsection{subsubsection}{3}%
  \z@{.5\linespacing\@plus.7\linespacing}{.3\linespacing}%
  {\normalfont\bfseries}}
\begin{document}

\author[E. Ross]{Erick Ross}
\address[E. Ross]{School of Mathematical and Statistical Sciences, Clemson University, Clemson, SC, 29634}
\email{erickr@clemson.edu}

\author[H. Xue]{Hui Xue}
\address[H. Xue]{School of Mathematical and Statistical Sciences, Clemson University, Clemson, SC, 29634}
\email{huixue@clemson.edu}

\title{Asymptotics and sign patterns of Hecke polynomial coefficients}

\subjclass[2020]{Primary 11F25; Secondary 11F72 and 11F11.}
\keywords{Hecke operator, Hecke polynomial, Eichler-Selberg trace formula}

\begin{abstract}
    We determine the asymptotic behavior of the coefficients of Hecke polynomials. In particular, this allows us to determine signs of these coefficients when the level or the weight is sufficiently large. In all but finitely many cases, this also verifies a conjecture on the nanvanishing of the coefficients of Hecke polynomials.
\end{abstract}

\maketitle

\section{Introduction} 

For integers $m \geq 1$, $N$ coprime to $m$, and $k\geq 2$ even, let $S_k(\Gamma_0(N))$ denote the space of cuspforms of level $N$ and weight $k$. 
Let $T'_{m}(N,k) := \frac{1}{m^{(k-1)/2}} T_{m}(N,k)$ denote the normalized $m$-th Hecke operator on $S_k(\Gamma_0(N))$. 
For each integer $r\ge0$, let $c_{r}(m,N,k)$ denote the $r$-th coefficient of the characteristic polynomial $T'_{m}(N,k)(x)$ associated to $T'_{m}(N,k)$ as follows:
\begin{align}
    T'_{m}(N,k)(x)=\sum_{r=0}^{d} c_r(m,N,k) x^{d-r},
\end{align}
where $d=\dim S_k(\Gamma_0(N))$. 
Hecke operators are of central importance in the theory of modular forms, and are completely characterized by the Hecke polynomials. We would like to study the coefficients of these Hecke polynomials in order to understand their structure.
In particular, for any fixed $m$ and $r$, the main goal of this paper is to determine the asymptotic behavior of $c_{r}(m,N,k)$ as $N+k \to \infty$. This will also show that $c_{r}(m,N,k)$ is nonvanishing and further determine its sign in all but finitely many cases.

We give an outline of this paper. In Section \ref{sec:prelim-calcs}, we apply the Girard-Newton formula to the coefficients $c_{r}(m,N,k)$, and state the asymptotic behavior of $\Tr T'_m(N,k)$.
In Section \ref{sec:m-sq}, we consider the case when $m$ is a perfect square and prove the following result determining the asymptotic behavior of the $c_r(m,N,k)$. In the following, all big-$O$ notation is with respect to $N$ and $k$. Additionally, we use the notation "$O(N^\varepsilon)$", for example, to mean "$O(N^\varepsilon)$ for all $\varepsilon > 0$". 
\begin{theorem} \label{thm:m-sq}
    Fix an integer $r \ge 0$ and a perfect square $m \ge 1$.
    Then for $N$ coprime to $m$ and $k \geq 2$ even,
    \begin{align}
        c_r(m,N,k) = \frac{(-1)^r}{r!} \lrp{ \frac{1}{\sqrt{m}} \frac{k-1}{12} \psi(N)}^r + O(k^{r-1}N^{r-1/2+\varepsilon}).
    \end{align}
    Here, $\psi(N)$ denotes the multiplicative function $\psi(N) = N \prod_{p\mid N} \lrp{1+\frac1p}$.
\end{theorem}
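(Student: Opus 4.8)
The plan is to feed the asymptotics of $\Tr T'_m(N,k)$ from Section~\ref{sec:prelim-calcs} into the Girard--Newton formula. Since $c_0(m,N,k)=1$ identically we may assume $r\ge 1$. Put $P_j:=\Tr\bigl((T'_m(N,k))^j\bigr)$ for $j\ge 1$. The Girard--Newton formula, recorded in Section~\ref{sec:prelim-calcs}, expresses
\begin{align}
    c_r(m,N,k)=\sum_{\lambda\vdash r}\frac{(-1)^{\ell(\lambda)}}{z_\lambda}\prod_{i}P_{\lambda_i},
\end{align}
where $\lambda$ ranges over partitions of $r$, $\ell(\lambda)$ is the number of parts of $\lambda$, and $z_\lambda$ is the usual combinatorial constant (the order of the centralizer of a permutation of cycle type $\lambda$). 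The only partition of $r$ with $\ell(\lambda)=r$ is $\lambda=(1^r)$, and it contributes $\tfrac{(-1)^r}{r!}P_1^{r}$; every other partition $\lambda$ has $\ell(\lambda)\le r-1$, so it contributes a constant times a product of at most $r-1$ of the $P_j$. I will show the $(1^r)$ term produces the main term and that all the others are absorbed into the error.

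For the $(1^r)$ term, since $m$ is a perfect square the Eichler--Selberg trace formula gives, as stated in Section~\ref{sec:prelim-calcs},
\begin{align}
    P_1=\Tr T'_m(N,k)=\frac{1}{\sqrt m}\,\frac{k-1}{12}\,\psi(N)+O\bigl(N^{1/2+\varepsilon}\bigr),
\end{align}
and this estimate is uniform in $k$. Here the main part $\tfrac{1}{\sqrt m}\tfrac{k-1}{12}\psi(N)$ is $O(kN^{1+\varepsilon})$ because $\psi(N)=O(N^{1+\varepsilon})$, so raising $P_1$ to the $r$-th power and expanding binomially yields
\begin{align}
    \frac{(-1)^r}{r!}P_1^{r}=\frac{(-1)^r}{r!}\lrp{\frac{1}{\sqrt m}\,\frac{k-1}{12}\,\psi(N)}^{r}+O\bigl(k^{r-1}N^{r-1/2+\varepsilon}\bigr),
\end{align}
since for each $s\ge 1$ the $s$-th cross term is $O\bigl((kN^{1+\varepsilon})^{r-s}(N^{1/2+\varepsilon})^{s}\bigr)=O\bigl(k^{r-s}N^{r-s/2+\varepsilon}\bigr)\subseteq O\bigl(k^{r-1}N^{r-1/2+\varepsilon}\bigr)$.

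It remains to bound the $\lambda\ne(1^r)$ terms. For $\gcd(m,N)=1$ the operator $T'_m(N,k)$ is self-adjoint for the Petersson inner product on $S_k(\Gamma_0(N))$, hence diagonalizable with real eigenvalues $\mu_1,\dots,\mu_d$, and each satisfies $|\mu_i|\le\sigma_0(m)$, the number of divisors of $m$, by Deligne's bound. Consequently, for every $1\le j\le r$,
\begin{align}
    |P_j|=\Bigl|\sum_{i=1}^{d}\mu_i^{j}\Bigr|\le d\,\sigma_0(m)^{j}=O\bigl(kN^{1+\varepsilon}\bigr),
\end{align}
using $d=\dim S_k(\Gamma_0(N))=O(kN^{1+\varepsilon})$. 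Since each partition $\lambda\ne(1^r)$ has at most $r-1$ parts, its term is $O\bigl((kN^{1+\varepsilon})^{r-1}\bigr)=O\bigl(k^{r-1}N^{r-1+\varepsilon}\bigr)\subseteq O\bigl(k^{r-1}N^{r-1/2+\varepsilon}\bigr)$. Adding up the $O(1)$-many partitions and combining with the previous display gives the theorem.

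The substantive ingredient — and the place where the perfect-square hypothesis is used — is the $k$-uniform trace estimate for $\Tr T'_m(N,k)$ from Section~\ref{sec:prelim-calcs}: its uniformity in $k$ is exactly what makes the error in the theorem the stated size, and it rests on the observation that after normalizing $T_m$ by $m^{-(k-1)/2}$ the characteristic roots governing the elliptic terms of the Eichler--Selberg formula have absolute value $\sqrt m$, so those terms stay bounded as $k\to\infty$, while the parabolic contribution is controlled by the number of cusps of $\Gamma_0(N)$. Once that input is granted, the present proof is routine bookkeeping with Newton's identities, the only other point being the crude bound $|P_j|=O(kN^{1+\varepsilon})$ for $j\ge 2$, which Deligne's estimate supplies at once.
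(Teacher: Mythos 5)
Your proof is correct, but it is organized differently from the paper's. The paper proves the theorem by strong induction on $r$, using the recursive Girard--Newton identity $c_r=\frac{-1}{r}\sum_{j=1}^{r}c_{r-j}p_j$ (the only form actually recorded in Section \ref{sec:prelim-calcs}; the partition-sum expansion $c_r=\sum_{\lambda\vdash r}\frac{(-1)^{\ell(\lambda)}}{z_\lambda}\prod_i p_{\lambda_i}$ you invoke is an equivalent closed form, but you should cite it as such rather than attribute it to the paper). In the paper's induction the main term comes from the single product $c_{r-1}p_1$, while all terms $c_{r-j}p_j$ with $j\ge 2$ are absorbed into the error using exactly the two inputs you use: the trace asymptotic for $p_1$ and the bound $|p_j|\le d\,\sigma_0(m)^j=O(kN^{1+\varepsilon})$ from Deligne plus the dimension formula. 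Your closed-form version replaces the induction by the observation that $(1^r)$ is the unique partition with $r$ parts, so it alone contributes $\frac{(-1)^r}{r!}p_1^r$ while every other partition has at most $r-1$ parts and is $O(k^{r-1}N^{r-1+\varepsilon})$; your binomial expansion of $p_1^r$ and the verification that each cross term lands in $O(k^{r-1}N^{r-1/2+\varepsilon})$ are both accurate. The trade-off is minor: your route is more direct and makes the source of the main term transparent in one step, whereas the paper's inductive recursion is the template it then reuses verbatim for the non-square case of Theorem \ref{thm:m-nsq}, where the bookkeeping of which power sums dominate ($p_2$ rather than $p_1$) is more delicate. Both arguments rest on the same substantive ingredients, so I consider your proof a valid alternative presentation.
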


In Section \ref{sec:m-nsq}, we consider the case when $m$ is not a perfect square and establish the following asymptotics  of $c_r(m,N,k)$. Recall here that $\sigma_1(m)$ denotes the sum of divisors function $\sigma_1(m) := \sum_{d \mid m} d$, and that $(2r)!!$ denotes the double factorial $(2r)!! := 2r (2r-2)(2r-4) \cdots 2$.
\begin{theorem} \label{thm:m-nsq}
    Fix an integer $r \ge 0$ and a non-square $m \ge 1$.
    Then for $N$ coprime to $m$ and $k \geq 2$ even,
    \begin{align}
        c_{2r}(m,N,k) &= \frac{(-1)^r}{(2r)!!} \lrp{\frac{\sigma_1(m)}{m} \frac{k-1}{12} \psi(N)}^r + O(k^{r-1} N^{r-1/2+\varepsilon}) \qquad \text{and}  \\
        c_{2r+1}(m,N,k) &= c_1(m,N,k) \cdot \frac{(-1)^r}{(2r)!!}  \lrp{\frac{\sigma_1(m)}{m} \frac{k-1}{12}\psi(N)}^r  + O(k^{r-1} N^{r-1/2+\varepsilon}). 
    \end{align}
\end{theorem}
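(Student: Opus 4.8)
The plan is to proceed exactly as in the square case (Theorem 1.1) via the Girard–Newton identities, which express $c_r(m,N,k)$ as a polynomial in the power sums $p_j := \Tr (T'_m(N,k))^j = \Tr T'_{m^j}(N,k)$ adjusted by Hecke multiplicativity — more precisely, $\Tr (T'_m)^j$ unwinds via the Hecke algebra relations into a sum $\sum_{d \mid \gcd(m,\dots)} \cdots$ of traces $\Tr T'_{m^j/d^2}$, but the dominant term comes from the leading trace. The key input is the asymptotic for $\Tr T'_n(N,k)$ promised at the end of Section 2: when $n$ is a perfect square the trace has a main term of size $\frac{\sqrt n\,(k-1)}{12}\,\psi(N) \cdot \frac{1}{n^{1/2}}$ — wait, normalized so that it is $\frac{k-1}{12}\psi(N)$ up to the $n^{-1/2}$ scaling — plus an error $O(N^{1/2+\varepsilon})$ (times a bounded power of $k$); when $n$ is a non-square, the identity contribution vanishes and $\Tr T'_n(N,k) = O(k\, N^{1/2+\varepsilon})$ or similar lower-order size. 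So the first step is to record precisely, from Section 2, the size of $\Tr T'_n(N,k)$ in both cases.

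The second step is the bookkeeping: $m$ is a non-square, so $m^j$ is a perfect square precisely when $j$ is even. Hence among the power sums $p_1, p_2, \ldots, p_{2r}$, only those with even index carry a "large" main term of order $k N^{1+\varepsilon}$-ish (precisely order $\frac{k-1}{12}\psi(N) \asymp k N^{1+\varepsilon}$ after the normalization kills the $m^{j/2}$), while the odd-indexed ones are of strictly smaller order. Plugging this into the Newton–Girard expression for $c_{2r}$, the dominant contribution comes from the monomial $p_2^r$ (all other monomials $\prod p_{j_i}$ with $\sum j_i = 2r$ either involve an odd index, losing a factor of roughly $N^{1/2}$, or involve fewer than $r$ factors of the large size, also losing powers of $N$). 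The coefficient of $p_2^r$ in the Newton–Girard formula for $e_{2r}$ is $\frac{(-1)^r}{2^r r!} = \frac{(-1)^r}{(2r)!!}$, which produces the stated main term with $\Tr T'_{m^2}(N,k)/m \sim \frac{\sigma_1(m)}{m}\cdot\frac{k-1}{12}\psi(N)$ — here $\sigma_1(m)$ enters because unwinding $\Tr (T'_m)^2 = \Tr T'_{m^2} + (\text{lower})$ and using the square-case trace asymptotic introduces the divisor sum. For the odd coefficient $c_{2r+1}$, the same analysis shows the dominant monomial is $p_1 p_2^r$: one is forced to spend exactly one odd-index factor, and $p_1$ is the cheapest, giving $c_1(m,N,k)$ times the same combinatorial constant $\frac{(-1)^r}{(2r)!!}$ and the same power of the large quantity; every alternative is smaller by a power of $N$.

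The main obstacle is controlling all the error terms uniformly and checking that no competing monomial in the Newton–Girard expansion matches the claimed main term — in particular verifying that replacing a factor $p_2$ by $p_1^2$, or by $p_1 p_1$-type combinations, or collapsing two factors into one $p_{\text{(larger index)}}$, genuinely costs a factor of $N^{1/2}$ rather than being comparable; this requires the precise error bound $O(k N^{1/2+\varepsilon})$ for non-square traces and $O(k N^{1/2 + \varepsilon})$ for the error in the square traces, and then a careful accounting that each "defect" from the optimal monomial $p_2^r$ (resp. $p_1 p_2^r$) loses at least $N^{1/2}$. A secondary technical point is the Hecke-multiplicativity unwinding of $\Tr(T'_m)^j$ into a sum of $\Tr T'_{m^j/d^2}$ over $d \mid m^{\lfloor j/2\rfloor}$ (or similar) and confirming that all terms with $d>1$, as well as the cross terms, are absorbed into the error; this is where the constant $\sigma_1(m)/m$ must be pinned down exactly, so I would isolate the $j=2$ computation $\Tr(T'_m)^2 = \sum_{d\mid m} \Tr T'_{m^2/d^2}$ and extract its main term carefully, the other $\Tr T'_{m^2/d^2}$ for $d>1$ being squares as well but with smaller normalized main term $\frac{1}{(m/d)}\cdot\frac{k-1}{12}\psi(N)$, whose sum over $d \mid m$ gives exactly $\frac{\sigma_1(m)}{m}\cdot\frac{k-1}{12}\psi(N)$ after reindexing. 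Once these estimates are in hand, assembling the two displayed formulas is routine.
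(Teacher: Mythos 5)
Your plan follows essentially the same route as the paper: Newton--Girard to reduce $c_{2r}$ and $c_{2r+1}$ to the power sums $p_j$, the trace asymptotics of Section~2 for the main terms, and the Hecke composition formula $\Tr (T'_m)^2=\sum_{d\mid m}\Tr T'_{m^2/d^2}$ to extract the constant $\sigma_1(m)/m$ (your reindexing giving $\sum_{d\mid m} d/m$ is exactly the paper's computation of $p_2$). The combinatorial coefficients $\frac{(-1)^r}{2^r r!}=\frac{(-1)^r}{(2r)!!}$ of $p_2^r$ and $p_1p_2^r$ are also correct. The only structural difference is that you expand $c_n$ directly over partitions of $n$, whereas the paper runs a strong induction on $r$ through the recursion $c_n=-\frac1n\sum_j c_{n-j}p_j$, which makes the error bookkeeping a one-line application of the inductive hypothesis rather than a case analysis over partitions; either works.

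There is, however, one quantitative point where your plan as written does not close. You propose to use the bound $\Tr T'_n(N,k)=O(kN^{1/2+\varepsilon})$ for non-square $n$, hence $p_j=O(kN^{1/2+\varepsilon})$ for odd $j$. That is not strong enough: with it, the competing partition $p_1^2p_2^{r-1}$ in $c_{2r}$ has size $O(k^{r+1}N^{r+\varepsilon})$, which is not absorbed by the claimed error $O(k^{r-1}N^{r-1/2+\varepsilon})$ and even exceeds the main term in the $k$-aspect; similarly $p_3p_2^{r-1}$ in $c_{2r+1}$ would contribute $O(k^rN^{r-1/2+\varepsilon})$, off by a factor of $k$. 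What the argument actually requires, and what the paper's Lemma~\ref{lem:trace-estimate} supplies, is $\Tr T'_n(N,k)=O(N^{\varepsilon})$ for non-square $n$, with no growth in $k$ and no $N^{1/2}$; since $m^j/d^2$ is a non-square for every $d$ when $m$ is a non-square and $j$ is odd, this gives $p_1,p_3=O(N^{\varepsilon})$, and then every defective partition loses at least $N^{1/2}$ (and at least one power of $k$) against $p_2^r$, resp. $p_1p_2^r$, exactly as you intend. With that single bound corrected, your outline assembles into a complete proof.
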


In Section \ref{sec:new-subspace}, we extend Theorems \ref{thm:m-sq} and \ref{thm:m-nsq} to the new subspace $S_k^\nw(\Gamma_0(N))$.

Finally, in Sections \ref{sec:sign-patterns-more-general} and \ref{sec:conj-coeffs}, we discuss these results. In Section \ref{sec:sign-patterns-more-general}, we discuss how the arguments given in Theorems \ref{thm:m-sq} and \ref{thm:m-nsq} for the Hecke polynomials can also be applied to other polynomials. In particular, these arguments reveal a coefficient sign pattern for a wide class of polynomials. 
Then in Section \ref{sec:conj-coeffs}, we discuss a conjecture on the nonvanishing of the Hecke polynomial coefficients and survey its current progress.

\section{Preliminary Calculations} \label{sec:prelim-calcs}

For simplicity, we write $c_r$ for the coefficients $c_r(m,N,k)$. Let $d=\dim S_k(\Gamma_0(N))$ and $\lambda_1, \ldots, \lambda_d$ denote the eigenvalues of $T'_m(N,k)$. Observe that $(-1)^r c_r$ is just the $r$-th elementary symmetric polynomial of these eigenvalues:
\begin{align}
    c_0=1,\quad -c_1 = \sum_{1\leq i\leq d} \lambda_i \,, \quad c_2 = \sum_{1\leq i < j \leq d} \lambda_i \lambda_j \,, \quad -c_3 = \sum_{1\leq i<j<\ell\leq d} \lambda_i \lambda_j \lambda_\ell\,, \quad \ldots
\end{align}
We also write $p_r$ for the sum of $r$-th powers of these eigenvalues:
\begin{align} \label{eqn:pr-def}
    p_r:=\sum_{i=1}^{d} \lambda_i^r.
\end{align}
Then the Girard-Newton identities yield the following relation
between the $c_r$ and the $p_r$.
\begin{lemma}[{\cite[p. 38]{tignol}}] \label{lem:newton-id}
Let $c_r$ and $p_r$ be defined as above. Then for $r\ge1$,
    \begin{align}
        c_r = \frac{-1}{r} \sum_{j=1}^r c_{r-j} p_j.
    \end{align}
\end{lemma}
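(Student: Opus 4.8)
The plan is to deduce this from the generating-function form of Newton's identities, adapted to the sign convention $c_r = (-1)^r e_r$ used here (with $e_r$ the $r$-th elementary symmetric polynomial in $\lambda_1,\dots,\lambda_d$, so that $e_0 = 1$ and $e_r = 0$ for $r > d$). First I would introduce the reciprocal polynomial
\begin{align}
    P(t) := \prod_{i=1}^{d}(1 - \lambda_i t) = \sum_{r=0}^{d} c_r\, t^r,
\end{align}
which is simply $t^d\, T'_m(N,k)(1/t)$, so that its coefficients are exactly the $c_r$, and in particular $P(0) = c_0 = 1$. Since $P$ is a polynomial with constant term $1$, the power series $\log P(t) \in \BC[[t]]$ is well defined, and
\begin{align}
    \log P(t) = \sum_{i=1}^{d} \log(1 - \lambda_i t) = -\sum_{i=1}^{d} \sum_{j \ge 1} \frac{\lambda_i^j}{j}\, t^j = -\sum_{j \ge 1} \frac{p_j}{j}\, t^j.
\end{align}

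Next I would differentiate this identity in $\BC[[t]]$ and clear the denominator $P(t)$, obtaining
\begin{align}
    \frac{P'(t)}{P(t)} = -\sum_{j \ge 1} p_j\, t^{j-1}, \qquad \text{hence} \qquad P'(t) = -\,P(t) \sum_{j \ge 1} p_j\, t^{j-1}.
\end{align}
Now I would compare coefficients of $t^{r-1}$ on both sides. On the left, $P'(t) = \sum_{r \ge 1} r c_r\, t^{r-1}$, contributing $r c_r$. On the right, writing $P(t) = \sum_{s \ge 0} c_s\, t^s$ (with the convention $c_s := 0$ for $s > d$, which is harmless), the coefficient of $t^{r-1}$ in the product is $-\sum_{j=1}^{r} c_{r-j}\, p_j$, since the pairs contributing are exactly those with $s + (j-1) = r-1$ and $j \ge 1$. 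Equating the two gives $r c_r = -\sum_{j=1}^{r} c_{r-j}\, p_j$ for every $r \ge 1$, which is the claimed identity after dividing by $r$.

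The one point that needs a word of justification is the legitimacy of the formal operations — forming $\log P(t)$, differentiating term by term, and multiplying power series — but since $P(t)$ is a polynomial with constant term $1$ this all takes place in the ring of formal power series $\BC[[t]]$ (indeed in the local ring $\BC[t]_{(t)}$), where $\log(1 + t g(t))$, formal differentiation, and the product rule are all valid; no analytic convergence is invoked. I do not expect any real obstacle here, as this is the classical Girard--Newton identity (cf. \cite[p. 38]{tignol}); an alternative would be a purely combinatorial induction on $r$, expanding $\sum_{j=1}^{r} c_{r-j} p_j = \sum_{j=1}^{r} c_{r-j} \sum_{i} \lambda_i^{j}$ and regrouping the monomials according to how many of the indices appearing in $e_{r-j}$ coincide with $i$, but the generating-function computation above is shorter and I would present that.
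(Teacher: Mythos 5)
Your proof is correct: the generating-function computation with $P(t)=\prod_i(1-\lambda_i t)=\sum_r c_r t^r$ correctly absorbs the sign convention $c_r=(-1)^r e_r$, and comparing coefficients of $t^{r-1}$ in $P'(t)=-P(t)\sum_{j\ge1}p_j t^{j-1}$ gives exactly $rc_r=-\sum_{j=1}^r c_{r-j}p_j$. The paper itself offers no proof and simply cites Tignol for this classical identity, so there is nothing to compare against; your argument is the standard derivation and is complete.
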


We also give estimates on the traces of Hecke operators. These estimates will be needed shortly when we express the $p_r$ in terms of traces of certain Hecke operators.
In a previous paper \cite{ross-xue}, we proved the following result by analyzing the various terms of the Eichler-Selberg trace formula.  
\begin{lemma}[{\cite[Lemmas 4.1, 4.2]{ross-xue}}] \label{lem:trace-estimate}
Fix an integer $m \ge 1$. Then for $N$ coprime to $m$ and $k \geq 2$ even,
\begin{align}
    \Tr T'_m(N,k) = \begin{dcases}
        \frac{1}{\sqrt{m}}\frac{k-1}{12} \psi(N)  + O(N^{1/2+\varepsilon}),  & \text{if $m$ is a perfect square}, \\
        O(N^\varepsilon), & \text{if $m$ is not a perfect square}.
    \end{dcases}
\end{align}
\end{lemma}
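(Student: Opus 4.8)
The plan is to substitute the explicit Eichler--Selberg trace formula for $\Tr T_m(N,k)$ on $S_k(\Gamma_0(N))$ --- valid because $\gcd(m,N)=1$ --- and then estimate each of its terms separately, treating $m$ as fixed and tracking the dependence on $N$ and $k$. Recall that this formula writes $\Tr T_m(N,k)$ as a sum of an identity (scalar) term, an elliptic term (indexed by integers $t$ with $t^2<4m$), a hyperbolic/parabolic term (indexed by factorizations $dd'=m$), and --- when $k=2$ --- one further correction term. Dividing through by $m^{(k-1)/2}$ gives the corresponding decomposition of $\Tr T'_m(N,k)$; the dichotomy in the lemma will come entirely from whether or not the identity term is present.

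The identity term equals $\tfrac{k-1}{12}\psi(N)\,m^{k/2-1}$ if $m$ is a perfect square and vanishes otherwise, so after normalization it contributes exactly $\tfrac{1}{\sqrt m}\tfrac{k-1}{12}\psi(N)$ in the square case and nothing in the non-square case. It therefore remains to bound every other term by $O(N^{1/2+\varepsilon})$ when $m$ is a square and by $O(N^\varepsilon)$ when $m$ is not --- crucially, with no dependence on $k$.

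For the elliptic term I would use three ingredients: the Gegenbauer-type polynomial $P_k(t,m)$ satisfies $|P_k(t,m)|\le 2\,m^{(k-1)/2}/\sqrt{4m-t^2}$, so after normalization its size is $\le 2/\sqrt{4m-t^2}$, a quantity depending only on $m$ (this is what removes all $k$-dependence); the Hurwitz class numbers $h_w\big((t^2-4m)/f^2\big)$ are bounded in terms of $m$ alone since $m$ is fixed; and the accompanying multiplicative arithmetic factor in $N$ --- essentially the number of roots of $X^2-tX+m$ modulo divisors of $N$ --- is $O(N^\varepsilon)$, because $|t^2-4m|$ is a fixed bounded integer and hence $X^2-tX+m$ has $O(1)$ roots modulo each prime power $p^a\| N$ (trivially if $p^a\le|t^2-4m|$, and at most $2p^{v_p(t^2-4m)/2}=O(1)$ otherwise), giving at most $C(m)^{\omega(N)}=O(N^\varepsilon)$ roots modulo $N$. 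There are only finitely many $t$, so the normalized elliptic term is $O(N^\varepsilon)$. The hyperbolic term from factorizations $dd'=m$ with $d\ne d'$ is handled the same way: the normalized weight $(\min(d,d')/\sqrt m)^{k-1}$ is $\le 1$, and its $N$-factor (a sum of $\varphi(\gcd(c,N/c))$ over $c\mid N$ with $\gcd(c,N/c)\mid d'-d$) is $O(N^\varepsilon)$ because $d'-d$ is a fixed nonzero integer, which forces $\gcd(c,N/c)$ to stay bounded.

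The one place where a genuinely larger error appears --- and hence the crux of the argument --- is the case $m$ a perfect square. There the diagonal factorization $d=d'=\sqrt m$ contributes a parabolic term (and, when $k=2$, so does the extra correction term), and both of these involve the number of cusps of $\Gamma_0(N)$, namely $\sum_{c\mid N}\varphi(\gcd(c,N/c))$, which is $O(N^{1/2+\varepsilon})$ --- and genuinely of size $\asymp\sqrt N$ for prime-power levels --- while the accompanying weight normalizes to $m^{(k-1)/2}/(\sqrt m)^{\,k-1}=1$. This produces the $O(N^{1/2+\varepsilon})$ in the square case. When $m$ is not a square there is no identity term and no $d=d'$ factorization, and the remaining $k=2$ correction is easily seen to be $O(N^\varepsilon)$, so the whole normalized trace is $O(N^\varepsilon)$. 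Collecting these estimates --- and absorbing the finitely many $m$-dependent constants --- gives the two claimed bounds; the only subtlety demanding real care is to make sure that no surviving term secretly grows with $k$, which is exactly why the estimate $|P_k(t,m)|\le 2m^{(k-1)/2}/\sqrt{4m-t^2}$ and the absence of a linear-in-$k$ factor in the parabolic term are the decisive inputs.
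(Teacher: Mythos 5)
Your proposal is correct and follows essentially the same route as the source: the paper does not reprove this lemma but imports it from \cite[Lemmas 4.1, 4.2]{ross-xue}, where it is established exactly by the term-by-term analysis of the Eichler--Selberg trace formula that you carry out (identity term giving the main term only for square $m$, elliptic and off-diagonal hyperbolic terms bounded by $O(N^\varepsilon)$ uniformly in $k$ via the bound on $P_k(t,m)$, and the diagonal $d=d'=\sqrt m$ contribution of size $O(N^{1/2+\varepsilon})$ accounting for the larger error in the square case). Your identification of the cusp-counting sum $\sum_{c\mid N}\varphi(\gcd(c,N/c))$ as the source of the $N^{1/2+\varepsilon}$ and of the $P_k$ bound as the reason no $k$-dependence survives is precisely the crux of that argument.
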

To gauge the growth of the terms in this formula, note that $\psi(N) \geq N$ and $\psi(N) = O(N^{1+\varepsilon})$ \cite[Sections~18.1,~22.13]{hardy-wright}.

\section{When \texorpdfstring{$m$}{m} is a perfect square} \label{sec:m-sq}

In this section, we consider the  case when $m$ is a perfect square.
We then have the following estimates on the $p_j$ \eqref{eqn:pr-def}.
\begin{lemma} \label{lem:m-sq:pj-estimates}
Fix an integer $r \ge 1$ and a perfect square $m \ge 1$.
Then for $N$ coprime to $m$ and $k \geq 2$ even,
\begin{align} 
    p_1 &= \frac{1}{\sqrt{m}} \frac{k-1}{12} \psi(N)  + O(N^{1/2+\varepsilon}), \qquad \text{and} \\
    p_{j} &= O(kN^{1+\varepsilon}),  \qquad \text{for all $1 \leq j \leq r$}.
\end{align}
\end{lemma}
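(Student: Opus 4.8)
The whole statement reduces to Lemma~\ref{lem:trace-estimate} once the power sums $p_j$ are expressed in terms of traces of Hecke operators. The estimate for $p_1$ is immediate: by definition $p_1 = \sum_{i=1}^d \lambda_i = \Tr T'_m(N,k)$, and since $m$ is a perfect square, Lemma~\ref{lem:trace-estimate} gives $p_1 = \frac{1}{\sqrt m}\frac{k-1}{12}\psi(N) + O(N^{1/2+\varepsilon})$ directly.

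For the remaining estimate, $p_j = O(kN^{1+\varepsilon})$ for $1 \le j \le r$ (the case $j=1$ being already contained in the estimate above), the plan is as follows. Because the eigenvalues of $\lrp{T'_m(N,k)}^j$ are exactly the $\lambda_i^j$, we have $p_j = \Tr\lrp{T'_m(N,k)}^j$, so it suffices to rewrite $\lrp{T'_m(N,k)}^j$ as a linear combination of Hecke operators whose indices and coefficients depend only on $m$ and $j$, not on $N$ or $k$. This is supplied by the multiplicative structure of the Hecke algebra: writing $m = \prod_p p^{e_p}$ we have $T'_m = \prod_p T'_{p^{e_p}}$, and iterating the relations $T'_p T'_{p^n} = T'_{p^{n+1}} + T'_{p^{n-1}}$ for $n \ge 1$ (together with $T'_p T'_{p^0} = T'_p$) expresses each $\lrp{T'_{p^{e_p}}}^{j}$, and hence $\lrp{T'_m}^{j}$, as a nonnegative-integer combination of operators $T'_{m'}$ with $m' \mid m^{j}$. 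All such $m'$ are coprime to $N$, so Lemma~\ref{lem:trace-estimate} applies to each $\Tr T'_{m'}(N,k)$; using $\psi(N) = O(N^{1+\varepsilon})$ and $\frac{k-1}{12} = O(k)$ to absorb the possible main term when $m'$ is a square, every such trace is $O(kN^{1+\varepsilon})$. Since the number of terms and their multiplicities are bounded purely in terms of $m$ and $j \le r$, summing yields $p_j = O(kN^{1+\varepsilon})$.

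The arithmetic input is thus entirely carried by Lemma~\ref{lem:trace-estimate}; the only point needing care is the bookkeeping for the expansion of $\lrp{T'_m}^j$ in the Hecke algebra — specifically, making it transparent that the resulting set of indices $m'$ and the multiplicities are independent of $N$ and $k$, so that the implied constants depend only on $m$ and $r$. I do not anticipate a genuine obstacle, as this is a routine induction on $j$ via the prime-power relations above, but it is the step whose uniformity must be spelled out explicitly.
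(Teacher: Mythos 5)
Your estimate for $p_1$ is exactly the paper's (both read it off from Lemma \ref{lem:trace-estimate}), but for the bound $p_j = O(kN^{1+\varepsilon})$ you take a genuinely different route. The paper's proof is a two-line argument that never touches the Hecke algebra structure: it bounds the dimension by $d = \Tr T'_1 = O(kN^{1+\varepsilon})$ (again via Lemma \ref{lem:trace-estimate}) and then invokes Deligne's bound $\lrabs{\lambda_i} \le \sigma_0(m)$ on the normalized eigenvalues, giving $\lrabs{p_j} \le d\,\sigma_0(m)^j$ at once. You instead expand $\lrp{T'_m}^j$ via the composition relations into a bounded (in terms of $m$ and $j$ only) nonnegative-integer combination of operators $T'_{m'}$ with $m'$ coprime to $N$, and apply the trace estimate to each summand; your uniformity concern is legitimate but easily dispatched, since the expansion is a finite identity in the abstract Hecke algebra independent of $N$ and $k$. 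Both arguments are correct. The paper's is shorter but imports the Ramanujan--Petersson bound as a black box; yours stays entirely within the trace-formula estimates of Lemma \ref{lem:trace-estimate} and is in fact the same technique the paper itself deploys in Lemma \ref{lem:m-nsq:pj-estimates} to compute $p_2$ and $p_3$ precisely in the non-square case, so it would yield sharper information (e.g.\ the exact main term of $p_j$ for each $j$) if one carried the bookkeeping further, at the cost of more writing.
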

\begin{proof}
    The first claim follows immediately from Lemma \ref{lem:trace-estimate}. 
    
    For the second claim, note from Lemma \ref{lem:trace-estimate} and the fact that $\psi(N)=O(N^{1+\epsilon})$,
    \begin{align}
        d := \dim S_k(\Gamma_0(N)) = \Tr T'_1 = O(kN^{1+\varepsilon}).
    \end{align}
    Then utilizing Deligne's bound $\lrabs{\lambda_i} \leq \sigma_0(m) = \sum_{d\mid m} 1$, we obtain
    \begin{align}
        \lrabs{p_j} = \lrabs{\sum_{i=1}^d \lambda_i^j} \leq \sum_{i=1}^d \sigma_0(m)^j = O(kN^{1+\varepsilon}),
    \end{align}
    as desired.
\end{proof}

For $m$ and $r$ fixed, we now determine the asymptotic behavior of $c_r(m,N,k)$ as $N+k \to \infty$. Note $c_r(m,N,k)$ is not technically defined for $N,k$ such that $\dim S_k(\Gamma_0(N)) < r$. However, there are only finitely many such pairs $(N,k)$ \cite[Theorem 1.1]{ross}, so it is well-defined here to ask about $c_r(m,N,k)$ as $N+k\to\infty$.

{
\renewcommand{\thetheorem}{\ref{thm:m-sq}}
\begin{theorem}
     Fix an integer $r \ge 0$ and a perfect square $m \ge 1$.
     Then for $N$ coprime to $m$ and $k \geq 2$ even,
    \begin{align}
        c_r(m,N,k) = \frac{(-1)^r}{r!} \lrp{ \frac{1}{\sqrt{m}} \frac{k-1}{12} \psi(N)}^r + O(k^{r-1}N^{r-1/2+\varepsilon}).
    \end{align}
\end{theorem}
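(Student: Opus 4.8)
The plan is to prove the formula by (strong) induction on $r$, using the Girard--Newton identity of Lemma \ref{lem:newton-id} as the recursion and feeding in the estimates of Lemma \ref{lem:m-sq:pj-estimates}. Throughout, abbreviate $A := \frac{1}{\sqrt{m}}\frac{k-1}{12}\psi(N)$, so that the asserted main term is $\frac{(-1)^r}{r!}A^r$; recall from the remark following Lemma \ref{lem:trace-estimate} that $N \le \psi(N) = O(N^{1+\varepsilon})$, hence $A = O(kN^{1+\varepsilon})$, and note that only this upper bound on $A$ will ever be used. The base cases are immediate: $c_0 = 1$ agrees with $\frac{(-1)^0}{0!}A^0$ up to any error, and $c_1 = -p_1 = -A + O(N^{1/2+\varepsilon})$ by Lemma \ref{lem:m-sq:pj-estimates}, which is exactly the claimed formula for $r=1$.

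For the inductive step, fix $r \ge 2$, assume the formula for all smaller indices, and separate the $j=1$ term in Lemma \ref{lem:newton-id}:
\begin{align}
    c_r = \frac{-1}{r}\, c_{r-1} p_1 \;-\; \frac{1}{r} \sum_{j=2}^{r} c_{r-j}\, p_j .
\end{align}
In the first term I would substitute $c_{r-1} = \frac{(-1)^{r-1}}{(r-1)!}A^{r-1} + O(k^{r-2}N^{r-3/2+\varepsilon})$ (induction) and $p_1 = A + O(N^{1/2+\varepsilon})$ (Lemma \ref{lem:m-sq:pj-estimates}) and expand the product. The leading piece is $\frac{(-1)^{r-1}}{(r-1)!}A^r$, and each of the three remaining cross-terms is $O(k^{r-1}N^{r-1/2+\varepsilon})$ after using $A = O(kN^{1+\varepsilon})$ and $A^{r-1} = O(k^{r-1}N^{r-1+\varepsilon})$; dividing by $-r$ converts the leading piece into $\frac{(-1)^r}{r!}A^r$, the desired main term. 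For the remaining sum, the inductive hypothesis combined with $A = O(kN^{1+\varepsilon})$ gives $c_{r-j} = O(k^{r-j}N^{r-j+\varepsilon})$ for $0 \le r-j \le r-2$, while $p_j = O(kN^{1+\varepsilon})$ for $2 \le j \le r$ by Lemma \ref{lem:m-sq:pj-estimates}; hence $c_{r-j}p_j = O(k^{r-j+1}N^{r-j+1+\varepsilon})$, which for $j \ge 2$ is $O(k^{r-1}N^{r-1+\varepsilon})$ and is absorbed into the target error $O(k^{r-1}N^{r-1/2+\varepsilon})$. Assembling the pieces gives the theorem.

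There is no genuine obstacle here; the content is entirely bookkeeping of error terms. The one point requiring care is checking that every error produced along the way stays within $O(k^{r-1}N^{r-1/2+\varepsilon})$ — in particular the product of the $O(N^{1/2+\varepsilon})$ error of $p_1$ against the size $O(k^{r-1}N^{r-1+\varepsilon})$ of the main part of $c_{r-1}$, and the product of the induction error of $c_{r-1}$ against $p_1$. These two products are exactly what pin down the exponents $r-1$ on $k$ and $r-1/2$ on $N$ in the statement, so it is essential that both land on that bound rather than exceeding it. Finally, one should remark that since only upper bounds on $A$ (equivalently on $\psi(N)$) enter, the estimate holds uniformly in $N$ and $k$; the matching lower bound on the main term, needed later for nonvanishing and sign determination, is handled separately.
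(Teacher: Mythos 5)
Your proposal is correct and follows essentially the same route as the paper: strong induction on $r$ via the Girard--Newton identity, isolating the $j=1$ term and controlling the rest with the $p_j$ bounds of Lemma \ref{lem:m-sq:pj-estimates}. The error bookkeeping (in particular the two products you flag as pinning down the exponents $k^{r-1}$ and $N^{r-1/2+\varepsilon}$) matches the paper's computation exactly.
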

\addtocounter{theorem}{-1}
}
\begin{proof}
    We proceed by strong induction on $r$. The base case of $r=0$ is immediate since $c_0 = 1$. 

    For $r \geq 1$, we have by Lemma \ref{lem:newton-id} that 
    \begin{align} \label{eq:cr-induction}
        c_{r} = \frac{-1}{r} \sum_{j=1}^{r} c_{r-j} p_j 
        = \frac{-1}{r} \Bigg[ c_{r-1} p_1 + \sum_{j=2}^{r} c_{r-j} p_j  \Bigg].
    \end{align}
    Then by the induction hypothesis,
    \begin{align}
        c_{r-1} &= \frac{(-1)^{r-1}}{(r-1)!} \lrp{ \frac{1}{\sqrt{m}} \frac{k-1}{12} \psi(N)}^{r-1} + O(k^{r-2}N^{r-3/2+\varepsilon}),
        \\
        c_{r-j} &= O(k^{r-2} N^{r-2+\varepsilon}), \qquad\qquad\qquad \text{for $2\le j \le r$},
    \end{align}
    and by Lemma \ref{lem:m-sq:pj-estimates}, 
    \begin{align}
        p_1 &= \frac{1}{\sqrt{m}} \frac{k-1}{12} \psi(N) + O(N^{1/2+\varepsilon}), \\
        p_j &=  O(kN^{1+\varepsilon}), \qquad\qquad\qquad \text{for $2\le j \le r$}.
    \end{align}

    Applying these estimates to \eqref{eq:cr-induction}, we obtain
    \begin{align}
        c_{r} &=  \frac{-1}{r}  \Bigg[ c_{r-1} p_1 + \sum_{j=2}^{r} c_{r-j} p_j \Bigg] \\
        &= \frac{-1}{r} \Bigg[   \lrp{\frac{(-1)^{r-1}}{(r-1)!} \lrp{\frac{1}{\sqrt{m}} \frac{k-1}{12} \psi(N)}^{r-1} + O(k^{r-2} N^{r-3/2+\varepsilon}) } \\
        & \qquad\qquad\qquad\qquad \times \lrp{ \frac{1}{\sqrt{m}} \frac{k-1}{12} \psi(N) + O(N^{1/2+\varepsilon}) }  \\
        &\qquad\ \ \ \  +  \sum_{j=2}^{r}  O(k^{r-2} N^{r-3/2+\varepsilon}) \cdot O(k N^{1+\varepsilon}) \Bigg] \\
        &= \frac{-1}{r} \Bigg[ \frac{(-1)^{r-1}}{(r-1)!} \lrp{\frac{1}{\sqrt m} \frac{k-1}{12} \psi(N)}^{r} + O(k^{r-1} N^{r-1/2+\varepsilon})  \Bigg] \\
        &= \frac{(-1)^r}{r!} \lrp{\frac{1}{\sqrt m} \frac{k-1}{12} \psi(N)}^{r} + O(k^{r-1} N^{r-1/2+\varepsilon}).
    \end{align}
    This completes the proof. 
\end{proof}

Theorem \ref{thm:m-sq} allows us in particular to determine the sign of $c_r(m,N,k)$ for all but finitely many pairs $(N,k)$.
\begin{corollary} \label{cor:sgn-msq}
    Fix an integer $r \ge 0$ and a perfect square $m \ge 1$.
    Then $c_{r}(m,N,k)$ has sign $(-1)^r$ for all but finitely pairs $(N,k)$. 
\end{corollary}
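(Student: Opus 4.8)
The plan is to extract the sign of $c_r(m,N,k)$ directly from the asymptotic expansion in Theorem \ref{thm:m-sq}, by verifying that its explicit main term dominates the error term outside a finite set of pairs $(N,k)$. First I would dispose of the trivial case $r = 0$: here $c_0(m,N,k) = 1$, which has sign $(-1)^0 = +1$ for every $(N,k)$. So assume $r \geq 1$ from now on.

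Write $c_r(m,N,k) = M + E$, where $M = \frac{(-1)^r}{r!}\big(\frac{1}{\sqrt m}\frac{k-1}{12}\psi(N)\big)^r$ is the main term from Theorem \ref{thm:m-sq} and $E = O(k^{r-1}N^{r-1/2+\varepsilon})$ is the error term. Since $k \geq 2$ and $\psi(N) > 0$, the base $\frac{1}{\sqrt m}\frac{k-1}{12}\psi(N)$ is positive, so $M$ has sign exactly $(-1)^r$; moreover the elementary bound $\psi(N) \geq N$ gives $|M| \geq \frac{1}{r!\, m^{r/2} 12^r}(k-1)^r N^r$. The heart of the argument is the magnitude comparison $|E| < |M|$. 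Fixing once and for all some $\varepsilon < 1/2$, say $\varepsilon = 1/4$, Theorem \ref{thm:m-sq} provides a constant $C$ (depending only on $r$ and $m$) with $|E| \leq C k^{r-1} N^{r-1/4}$. Combining the two bounds and using $\frac{k^{r-1}}{(k-1)^r} = \big(\frac{k}{k-1}\big)^{r-1}\frac{1}{k-1} \leq \frac{2^{r-1}}{k-1}$ for $k \geq 2$, I obtain
\begin{align}
    \frac{|E|}{|M|} \;\leq\; C\, r!\, m^{r/2}\, 12^r\, 2^{r-1} \cdot \frac{1}{(k-1)\, N^{1/4}},
\end{align}
and the right-hand side tends to $0$ as $N + k \to \infty$, since $(k-1)N^{1/4} \geq 1$ and is unbounded on any infinite set of such pairs. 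Hence $|E| < |M|$, so $c_r(m,N,k) = M + E$ is nonzero with the same sign as $M$, namely $(-1)^r$, for all but finitely many $(N,k)$.

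To finish, I would note that the set of pairs on which $c_r(m,N,k)$ is defined but $|E| \geq |M|$ is finite (by the displayed estimate), and the set on which $c_r(m,N,k)$ is undefined --- that is, $\dim S_k(\Gamma_0(N)) < r$ --- is finite by \cite[Theorem 1.1]{ross}; outside the union of these two finite sets, $c_r(m,N,k)$ has sign $(-1)^r$. I do not expect a genuine obstacle, as this is a routine deduction from Theorem \ref{thm:m-sq}. The only point needing care is that ``$N+k\to\infty$'' allows both $k$ bounded with $N\to\infty$ and $N$ bounded with $k\to\infty$, so one must check the ratio bound vanishes in both regimes (it does, being controlled by $1/((k-1)N^{1/4})$), and one should record that the implied constant in the error term of Theorem \ref{thm:m-sq} is uniform in $N$ and $k$ once $r$, $m$, and $\varepsilon$ are fixed.
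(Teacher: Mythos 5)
Your proposal is correct and follows essentially the same route as the paper: both factor out the main term $\frac{(-1)^r}{r!\,\sqrt{m}^r}\bigl(\frac{k-1}{12}\psi(N)\bigr)^r$, use $\psi(N)\ge N$ to show the relative error is $O\bigl(k^{-1}N^{-1/2+\varepsilon}\bigr)$, and conclude that it is eventually less than $1$ in magnitude as $N+k\to\infty$. Your explicit choice of $\varepsilon=1/4$ and the remarks on the $r=0$ case and the finitely many undefined pairs are just slightly more detailed bookkeeping of the same argument.
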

\begin{proof}
    Since $\psi(N) \ge N$, we can write the asymptotic formula from Theorem \ref{thm:m-sq} as 
    \begin{align}
        c_r(m,N,k) &= \frac{(-1)^r}{r! \sqrt{m}^r} \lrp{\frac{k-1}{12} \psi(N)}^r \lrb{1 
        +  O\lrp{k^{r-1}N^{r-1/2+\varepsilon}}  \lrp{\frac{k-1}{12} \psi(N)}^{-r} } \\
        &= \frac{(-1)^r}{r! \sqrt{m}^r} \lrp{\frac{k-1}{12} \psi(N)}^r \lrb{
        1 + O\lrp{k^{-1}N^{-1/2+\varepsilon}} 
        }.
    \end{align}
    Then since the $O\lrp{k^{-1}N^{-1/2+\varepsilon}}$ term tends to $0$ as $N \to \infty$ or $k \to \infty$, it will have magnitude less than $1$ for all but finitely many pairs $(N,k)$. This then yields the desired result.
\end{proof}

\section{When \texorpdfstring{$m$}{m} is not a perfect square} \label{sec:m-nsq}

In this section, we consider the remaining case when $m$ is not a perfect square.
First, we have the following estimates on the $p_j$ \eqref{eqn:pr-def}.
\begin{lemma} \label{lem:m-nsq:pj-estimates}
Fix an integer $r \ge 0$ and a non-square $m \ge 1$.
Then
\begin{align}
    p_1 &= -c_1 = O(N^\varepsilon), \\
    p_2 &= \frac{\sigma_1(m)}{m} \frac{k-1}{12} \psi(N) + O(N^{1/2+\varepsilon}), \\
    p_{3} &= O(N^\varepsilon), \\
    p_{j} &= O(kN^{1+\varepsilon}),  \qquad \text{for all $1 \leq j \leq r$}.
\end{align}
\end{lemma}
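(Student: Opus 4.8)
The plan is to prove Lemma \ref{lem:m-nsq:pj-estimates} by expressing each power sum $p_j$ in terms of traces of Hecke operators and then invoking Lemma \ref{lem:trace-estimate}. The key algebraic input is that for the normalized Hecke operators, $T'_m{}^j$ decomposes as a $\ZZ_{\ge0}$-combination of $T'_{m^{j-2i}}$-type operators coming from the Hecke algebra multiplication rules; concretely, since $T_{m}$ for $m$ a prime power satisfies $T_{p^a} T_{p^b} = \sum_{i=0}^{\min(a,b)} p^{i(k-1)} T_{p^{a+b-2i}}$, after normalizing one gets $T'_{p^a} T'_{p^b} = \sum_i T'_{p^{a+b-2i}}$, and by multiplicativity in $m$ one obtains $p_j = \Tr (T'_m)^j = \sum_{\ell} a_\ell \Tr T'_{m_\ell}$ for suitable indices $m_\ell$ dividing a power of $m$, with nonnegative integer coefficients $a_\ell$. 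I would first set up this decomposition cleanly (perhaps citing the standard Hecke algebra identities), noting in particular which of the $m_\ell$ are perfect squares.

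The decisive observation is the square/non-square dichotomy. Since $m$ is not a perfect square, $m^j$ is a perfect square exactly when $j$ is even, and more precisely $m_\ell$ is a perfect square only for those terms where the total exponent parity works out to even; for $j=1$ the only term is $T'_m$ itself (non-square), for $j=2$ one gets $T'_{m^2}$ (a square, contributing the main term) plus a bounded number of lower non-square terms, and for $j=3$ every $m_\ell$ appearing is a non-square. So I would carry out the counting of which terms are squares: for $j$ odd, I claim all indices $m_\ell$ are non-squares, hence $p_j = O(N^\varepsilon)$ by the second branch of Lemma \ref{lem:trace-estimate} (using that the number of terms depends only on $m$ and $j$, not on $N$ or $k$); for $j=2$, exactly the single top term $T'_{m^2}$ is a square, giving $\Tr T'_{m^2} = \frac{1}{m} \frac{k-1}{12}\psi(N) + O(N^{1/2+\varepsilon})$ — wait, one must check the normalization constant: $\Tr T'_{m^2} = \frac{1}{\sqrt{m^2}}\frac{k-1}{12}\psi(N) + O(N^{1/2+\varepsilon}) = \frac1m \frac{k-1}{12}\psi(N)+\cdots$, but the stated main term is $\frac{\sigma_1(m)}{m}\frac{k-1}{12}\psi(N)$, so the factor $\sigma_1(m)$ must come from the multiplicity with which square-index operators appear — indeed summing $T'_{p^a}T'_{p^b}$ over the diagonal and using multiplicativity, the total count of the "square part" works out to $\sigma_1(m)/\sqrt{?}$; I would track this combinatorial constant carefully. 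The remaining non-square terms for $j=2$ contribute $O(N^\varepsilon)$, absorbed into $O(N^{1/2+\varepsilon})$.

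For the final blanket estimate $p_j = O(kN^{1+\varepsilon})$ for all $1 \le j \le r$, I would simply reuse the argument from Lemma \ref{lem:m-sq:pj-estimates}: $d = \dim S_k(\Gamma_0(N)) = O(kN^{1+\varepsilon})$ and Deligne's bound $|\lambda_i| \le \sigma_0(m)$ give $|p_j| \le d\,\sigma_0(m)^j = O(kN^{1+\varepsilon})$, with the implied constant depending only on $m$ and $r$. This part is routine. I would present the proof in the order: (1) Hecke algebra decomposition of $(T'_m)^j$ into $\sum a_\ell T'_{m_\ell}$; (2) parity analysis determining which $m_\ell$ are squares, handling $j=1,2,3$ explicitly; (3) apply Lemma \ref{lem:trace-estimate} termwise; (4) the crude $O(kN^{1+\varepsilon})$ bound via Deligne.

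The main obstacle I anticipate is pinning down the exact combinatorial constant $\sigma_1(m)/m$ in the $p_2$ main term — i.e., verifying that when one expands $(T'_m)^2$ via the Hecke multiplication rule and collects the square-index operators $T'_{d^2}$ for $d \mid m$, each contributes its own main term $\frac1d \frac{k-1}{12}\psi(N)$ and these sum to $\sum_{d\mid m}\frac1d \cdot \frac{k-1}{12}\psi(N) = \frac{\sigma_1(m)}{m}\frac{k-1}{12}\psi(N)$. Getting the normalization factors and the multiplicativity bookkeeping exactly right (especially ensuring no spurious cross terms and that every square-index term is counted with multiplicity one) is the delicate point; everything else reduces to citing Lemma \ref{lem:trace-estimate} and Deligne's bound.
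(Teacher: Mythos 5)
Your proposal is correct and takes essentially the same route as the paper: both expand $(T'_m)^j$ via the Hecke composition formula $T'_aT'_b=\sum_{d\mid(a,b)}T'_{ab/d^2}$, apply Lemma \ref{lem:trace-estimate} termwise using the parity of $j$ to decide which indices are perfect squares, and get the blanket bound from Deligne's bound together with $d=O(kN^{1+\varepsilon})$. Your final resolution of the $p_2$ constant is exactly the paper's computation --- every index $(m/d)^2$ in the expansion of $(T'_m)^2$ is a square (so there are in fact no non-square terms for $j=2$, contrary to your intermediate remark), and $\sum_{d\mid m}d/m=\sigma_1(m)/m$.
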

\begin{proof}
    The first claim follows immediately from Lemma \ref{lem:trace-estimate}. 

    For the second claim, observe that $p_2 = \Tr {T_m'}^{2}$. Then by the Hecke operator composition formula \cite[Theorem 10.2.9]{cohen-stromberg} and Lemma \ref{lem:trace-estimate},
    \begin{align}
        p_2 = \Tr {T_m'}^2 
        = \sum_{d \mid m} \Tr T_{m^2/d^2}' 
        &= \sum_{d \mid m} \frac{d}{m} \frac{k-1}{12} \psi(N) + O(N^{1/2+\varepsilon}) \\
        &= \frac{\sigma_1(m)}{m} \frac{k-1}{12} \psi(N) + O(N^{1/2+\varepsilon}).
    \end{align}

    For the third claim, we similarly have by the Hecke operator composition formula and Lemma \ref{lem:trace-estimate},
    \begin{align}
        p_3 = \Tr {T_m'}^3 
        = \Tr \sum_{d \mid m} T_{m^2/d^2}' T_m' 
        &= \sum_{d \mid m} \ \sum_{\delta \mid (m^2/d^2, m)} \Tr T'_{m^3/d^2\delta^2} \\
        &= \sum_{d \mid m} \ \sum_{\delta \mid (m^2/d^2, m)} O(N^\varepsilon) \\
        &= O(N^{\varepsilon}).
    \end{align}

    Finally, the fourth claim follows from an identical argument as in Lemma \ref{lem:m-sq:pj-estimates}.
\end{proof}

For $m$ and $r$ fixed, we now determine the asymptotic behavior of $c_r(m,N,k)$ as $N+k \to \infty$.

{
\renewcommand{\thetheorem}{\ref{thm:m-nsq}}
\begin{theorem}
    Fix an integer $r \ge 0$ and a non-square $m \ge 1$. Then for $N$ coprime to $m$ and $k \geq 2$ even,
    \begin{align}
        c_{2r} &= \frac{(-1)^r}{(2r)!!} \lrp{\frac{\sigma_1(m)}{m} \frac{k-1}{12} \psi(N)}^r + O(k^{r-1} N^{r-1/2+\varepsilon}) \qquad \text{and}  \\
        c_{2r+1} &= c_1 \cdot \frac{(-1)^r}{(2r)!!}  \lrp{\frac{\sigma_1(m)}{m} \frac{k-1}{12}\psi(N)}^r  + O(k^{r-1} N^{r-1/2+\varepsilon}).  \label{eqn:main-thm-2r1}
    \end{align}
\end{theorem}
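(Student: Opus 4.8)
The plan is to mirror the strong induction used for Theorem \ref{thm:m-sq}, but now running the induction two steps at a time since the dominant contribution comes from $p_2$ rather than $p_1$. Specifically, I would prove both formulas simultaneously by strong induction on $r$, treating $c_{2r}$ and $c_{2r+1}$ together at each stage. The base cases are $c_0 = 1$ (immediate) and $c_1 = c_1$ (trivially true, and one records $c_1 = O(N^\varepsilon)$ from Lemma \ref{lem:m-nsq:pj-estimates}). For the general even case, I would apply Lemma \ref{lem:newton-id} to write
\begin{align}
    c_{2r} = \frac{-1}{2r}\Bigg[ c_{2r-2} p_2 + c_{2r-1} p_1 + c_{2r-3} p_3 + \sum_{j=4}^{2r} c_{2r-j} p_j \Bigg],
\end{align}
then substitute the estimates: $p_2$ contributes the main term, $p_1 = O(N^\varepsilon)$ and $p_3 = O(N^\varepsilon)$ are negligible after multiplying by the corresponding $c$'s (which are $O(k^{r-1}N^{r-1+\varepsilon})$ by the induction hypothesis), and $p_j = O(kN^{1+\varepsilon})$ for $j \ge 4$ paired with $c_{2r-j} = O(k^{r-2}N^{r-3/2+\varepsilon})$ gives error $O(k^{r-1}N^{r-1/2+\varepsilon})$. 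The main term is
\begin{align}
    \frac{-1}{2r}\cdot c_{2r-2}\, p_2 = \frac{-1}{2r}\cdot\frac{(-1)^{r-1}}{(2r-2)!!}\lrp{\frac{\sigma_1(m)}{m}\frac{k-1}{12}\psi(N)}^{r-1}\cdot \frac{\sigma_1(m)}{m}\frac{k-1}{12}\psi(N) + (\text{error}),
\end{align}
and since $2r \cdot (2r-2)!! = (2r)!!$ this collapses to exactly $\frac{(-1)^r}{(2r)!!}\lrp{\frac{\sigma_1(m)}{m}\frac{k-1}{12}\psi(N)}^r$ plus an acceptable error.

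For the odd case $c_{2r+1}$, I would similarly expand
\begin{align}
    c_{2r+1} = \frac{-1}{2r+1}\Bigg[ c_{2r-1} p_2 + c_{2r} p_1 + c_{2r-2} p_3 + \sum_{j=4}^{2r+1} c_{2r+1-j} p_j \Bigg].
\end{align}
Here the dominant term is $\frac{-1}{2r+1}\, c_{2r-1}\, p_2$; by the induction hypothesis $c_{2r-1} = c_1\cdot\frac{(-1)^{r-1}}{(2r-2)!!}\lrp{\frac{\sigma_1(m)}{m}\frac{k-1}{12}\psi(N)}^{r-1} + O(k^{r-2}N^{r-3/2+\varepsilon})$. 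One subtlety that needs care: the error term in $c_{2r-1}$ gets multiplied by $p_2 = O(kN^{1+\varepsilon})$, giving $O(k^{r-1}N^{r-1/2+\varepsilon})$, which is fine; and the main part of $c_{2r-1}$ multiplied by the error in $p_2$ (namely $O(N^{1/2+\varepsilon})$) gives $c_1 \cdot O(k^{r-1}N^{r-1/2+\varepsilon})$, which since $c_1 = O(N^\varepsilon)$ is absorbed into $O(k^{r-1}N^{r-1/2+\varepsilon})$. The term $c_{2r}\, p_1$: since $p_1 = O(N^\varepsilon)$ and $c_{2r} = O(k^rN^{r+\varepsilon})$ by the just-proved even case, this is $O(k^rN^{r+\varepsilon})$ — wait, this is too large. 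I need to be more careful here.

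Let me reconsider: the issue is that $c_{2r}\, p_1$ has size $O(k^r N^{r+\varepsilon}) \cdot O(N^\varepsilon) = O(k^r N^{r+\varepsilon})$, which dwarfs the claimed error $O(k^{r-1}N^{r-1/2+\varepsilon})$. The resolution must be that $c_1$ is a \emph{common factor}: indeed $p_1 = -c_1$ exactly, and one expects $c_{2r} p_1 = -c_1 \cdot c_{2r}$, where $c_{2r}$ itself is $O(k^r N^{r+\varepsilon})$ — so this term is $c_1$ times something of the right shape, and should be grouped into the $c_1 \cdot (\ldots)$ structure rather than the error. So the cleaner approach is: factor $c_1$ out of the odd-index formula entirely. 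The right bookkeeping is to prove $c_{2r+1} = c_1\cdot\frac{(-1)^r}{(2r)!!}\lrp{\frac{\sigma_1(m)}{m}\frac{k-1}{12}\psi(N)}^r + O(k^{r-1}N^{r-1/2+\varepsilon})$ by noting that every term in the Newton recursion for $c_{2r+1}$ either carries an explicit factor of $c_1$ (from $p_1 = -c_1$ appearing, or inductively from an odd-index $c$) or is genuinely of lower order; the terms $c_{2r-1}p_2$, $c_{2r-3}p_4$, etc.\ carry $c_1$ inductively, the term $c_{2r}p_1 = -c_{2r}c_1$ carries it explicitly, and $c_{2r-2}p_3$ with $p_3 = O(N^\varepsilon)$ is lower order.

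The main obstacle, then, is precisely this bookkeeping in the odd case: one must verify that after pulling out the common factor $c_1$, what remains genuinely matches $\frac{(-1)^r}{(2r)!!}\lrp{\frac{\sigma_1(m)}{m}\frac{k-1}{12}\psi(N)}^r$ up to the stated error, and in particular that the $c_{2r}p_1$ term's contribution $-c_1 c_{2r} = -c_1\cdot\frac{(-1)^r}{(2r)!!}\lrp{\frac{\sigma_1(m)}{m}\frac{k-1}{12}\psi(N)}^r + c_1\cdot O(k^{r-1}N^{r-1/2+\varepsilon})$, when combined with the $\frac{-1}{2r+1}c_{2r-1}p_2$ term and the other lower-order pieces, produces the right coefficient $\frac{(-1)^r}{(2r)!!}$ after dividing by $2r+1$. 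Tracking the constants: the even case gives $c_{2r} = \frac{(-1)^r}{(2r)!!}P^r + (\text{err})$ where $P := \frac{\sigma_1(m)}{m}\frac{k-1}{12}\psi(N)$, and $c_{2r-1}p_2 = c_1\frac{(-1)^{r-1}}{(2r-2)!!}P^{r-1}\cdot P + (\text{err}) = c_1\frac{(-1)^{r-1}}{(2r-2)!!}P^r + (\text{err})$, so $\frac{-1}{2r+1}[c_{2r-1}p_2 + c_{2r}p_1] = \frac{-1}{2r+1}[c_1\frac{(-1)^{r-1}}{(2r-2)!!}P^r - c_1\frac{(-1)^r}{(2r)!!}P^r] + (\text{err}) = \frac{-c_1 P^r}{2r+1}\cdot\frac{(-1)^{r-1}}{(2r-2)!!}[1 + \frac{1}{2r}] + (\text{err}) = \frac{-c_1 P^r}{2r+1}\cdot\frac{(-1)^{r-1}}{(2r-2)!!}\cdot\frac{2r+1}{2r} + (\text{err}) = c_1\frac{(-1)^r}{(2r)!!}P^r + (\text{err})$, exactly as claimed. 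I would present this computation carefully and verify that all remaining terms ($c_{2r-2}p_3$, $c_{2r-3}p_4$, and the sum $\sum_{j\ge 4} c_{2r+1-j}p_j$) contribute only $O(k^{r-1}N^{r-1/2+\varepsilon})$, possibly after absorbing a harmless factor of $c_1 = O(N^\varepsilon)$.
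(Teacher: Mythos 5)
Your proposal is correct and follows essentially the same route as the paper: strong induction via the Girard--Newton identity, with the main term coming from $c_{2r-2}p_2$ (resp.\ $c_{2r-1}p_2$), and the key observation that $p_1=-c_1$ exactly, so that $c_{2r}p_1$ contributes to the $c_1$-factored main term rather than the error. Your final combination of constants, $\frac{-1}{2r+1}\bigl(\frac{(-1)^{r-1}}{(2r-2)!!}-\frac{(-1)^r}{(2r)!!}\bigr)=\frac{(-1)^r}{(2r)!!}$, is exactly the computation in the paper.
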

\addtocounter{theorem}{-1}
}
\begin{proof}
    We proceed by strong induction on $r$. 
    The base case of $r=0$ is immediate since $c_0 = 1$ and $c_1 = c_1$. 

    Then for $r\ge1$, we have from Lemma \ref{lem:newton-id} that
    \begin{align} \label{eqn:c2r-sum-temp}
        c_{2r} &= \frac{-1}{2r} \sum_{j=1}^{2r} c_{2r-j} p_j = \frac{-1}{2r} \Bigg[ c_{2r-1} p_1 + c_{2r-2} p_2 + \sum_{j=3}^{2r} c_{2r-j} p_j  \Bigg].
    \end{align}
    Then by the induction hypotheses,
    \begin{align}
        c_{2r-1} &= O(k^{r-1} N^{r-1+\varepsilon}), \\
        c_{2r-2} &= \frac{(-1)^{r-1}}{(2r-2)!!} \lrp{\frac{\sigma_1(m)}{m} \frac{k-1}{12} \psi(N)}^{r-1} + O(k^{r-2} N^{r-3/2+\varepsilon}), \\
        c_{2r-j} &= O(k^{r-2} N^{r-2+\varepsilon}), \qquad\qquad \text{for $3\le j \le 2r$},
    \end{align}
    and by Lemma \ref{lem:m-nsq:pj-estimates}, 
    \begin{align}
        p_1 &= O(N^\varepsilon), \\
        p_2 &= \frac{\sigma_1(m)}{m} \frac{k-1}{12} \psi(N) + O(N^{1/2+\varepsilon}), \\
        p_j &=  O(kN^{1+\varepsilon}), \qquad\qquad \text{for $3\le j \le 2r$}.
    \end{align}

    Applying these estimates to \eqref{eqn:c2r-sum-temp}, we obtain
    \begin{align}
        c_{2r}  &=  \frac{-1}{2r}  \Bigg[ c_{2r-1} p_1 +  c_{2r-2} p_2  + \sum_{j=3}^{2r} c_{2r-j} p_j \Bigg] \\
        &= \frac{-1}{2r} \Bigg[ O(k^{r-1} N^{r-1+\varepsilon}) \cdot O(N^\varepsilon) \\
        &\qquad \  +  \lrp{\frac{(-1)^{r-1}}{(2r-2)!!} \lrp{\frac{\sigma_1(m)}{m} \frac{k-1}{12} \psi(N)}^{r-1} + O(k^{r-2} N^{r-3/2+\varepsilon}) } \\
        & \qquad\qquad\qquad\qquad\qquad\qquad \times \lrp{ \frac{\sigma_1(m)}{m} \frac{k-1}{12} \psi(N) + O(N^{1/2+\varepsilon}) }  \\
        &\qquad \ +  \sum_{j=3}^{2r}  O(k^{r-2} N^{r-2+\varepsilon}) \cdot O(k N^{1+\varepsilon}) \Bigg] \\
        &= \frac{-1}{2r} \Bigg[ O(k^{r-1}N^{r-1+\varepsilon})  + \frac{(-1)^{r-1}}{(2r-2)!!} \lrp{\frac{\sigma_1(m)}{m} \frac{k-1}{12} \psi(N)}^{r} \\
        &\qquad\qquad\qquad\qquad + O(k^{r-1} N^{r-1/2+\varepsilon}) + O(k^{r-1} N^{r-1+\varepsilon}) \Bigg] \\
        &= \frac{(-1)^r}{(2r)!!} \lrp{\frac{\sigma_1(m)}{m} \frac{k-1}{12} \psi(N)}^{r} + O(k^{r-1} N^{r-1/2+\varepsilon}),
    \end{align}
    verifying the first claim of the inductive step. 

    For the second claim of the inductive step, we similarly have
    from Lemma \ref{lem:newton-id} that
    \begin{align} \label{eqn:c2r1-sum-temp}
        c_{2r+1} &= \frac{-1}{2r+1} \sum_{j=1}^{2r+1}  c_{2r+1-j} p_j = \frac{-1}{2r+1} \Bigg[ c_{2r} p_1 + c_{2r-1} p_2 +  c_{2r-2} p_3 + \sum_{j=4}^{2r+1} c_{2r+1-j} p_j  \Bigg].
    \end{align}
    Then by the induction hypotheses and the proof for $c_{2r}$, 
    \begin{align}
        c_{2r} &= \frac{(-1)^r}{(2r)!!} \lrp{\frac{\sigma_1(m)}{m} \frac{k-1}{12} \psi(N)}^{r} + O(k^{r-1} N^{r-1/2+\varepsilon}), \\
        c_{2r-1} &= c_1 \cdot  \frac{(-1)^{r-1}}{(2r-2)!!}  \lrp{\frac{\sigma_1(m)}{m} \frac{k-1}{12}\psi(N)}^{r-1}  + O(k^{r-2} N^{r-3/2+\varepsilon}), \\
        c_{2r-2} &= O(k^{r-1} N^{r-1+\varepsilon}), \\
        c_{2r+1-j} &= O(k^{r-2} N^{r-2+\varepsilon}), \qquad\qquad \text{for $4\le j \le 2r+1$},
    \end{align}
    and by Lemma \ref{lem:m-nsq:pj-estimates}, 
    \begin{align}
        p_1 &= -c_1, \\
        p_2 &= \frac{\sigma_1(m)}{m} \frac{k-1}{12} \psi(N) + O(N^{1/2+\varepsilon}), \\
        p_3 &= O(N^\varepsilon), \\
        p_j &=  O(kN^{1+\varepsilon}), \qquad\qquad \text{for $4\le j \le 2r+1$}.
    \end{align}
    Applying these estimates to \eqref{eqn:c2r1-sum-temp}, we obtain
    \begin{align}
        c_{2r+1} &= \frac{-1}{2r+1} \Bigg[ c_{2r} p_1 + c_{2r-1} p_2 + c_{2r-2} p_3  + \sum_{j=4}^{2r+1} c_{2r+1-j} p_j \Bigg] \\
        &= \frac{-1}{2r+1} \Bigg[\lrp{\frac{(-1)^r}{(2r)!!} \lrp{\frac{\sigma_1(m)}{m} \frac{k-1}{12} \psi(N)}^{r} + O(k^{r-1} N^{r-1/2+\varepsilon}) } \cdot (-c_1) \\
        &\ \ \ + \lrp{c_1 \cdot  \frac{(-1)^{r-1}}{(2r-2)!!}  \lrp{\frac{\sigma_1(m)}{m} \frac{k-1}{12}\psi(N)}^{r-1}  + O(k^{r-2} N^{r-3/2+\varepsilon})} \\
        &\qquad\qquad\qquad\qquad\qquad\qquad  \times \lrp{\frac{\sigma_1(m)}{m} \frac{k-1}{12} \psi(N) + O(N^{1/2+\varepsilon})} \\
        &\ \ \ + O(k^{r-1} N^{r-1+\varepsilon}) \cdot O(N^\varepsilon)  + \sum_{j=4}^{2r+1}   O(k^{r-2} N^{r-2+\varepsilon}) \cdot O(kN^{1+\varepsilon}) \Bigg] \\
        &= \frac{-1}{2r+1} \Bigg[ -c_1 \cdot \frac{(-1)^r}{(2r)!!} \lrp{\frac{\sigma_1(m)}{m} \frac{k-1}{12} \psi(N)}^{r}  + O(k^{r-1} N^{r-1/2+\varepsilon})   \\
        &\ \ \ + c_1 \cdot  \frac{(-1)^{r-1}}{(2r-2)!!}  \lrp{\frac{\sigma_1(m)}{m} \frac{k-1}{12}\psi(N)}^{r}  + O(k^{r-1} N^{r-1/2+\varepsilon})  \\
        &\ \ \ + O(k^{r-1} N^{r-1+\varepsilon})  +  O(k^{r-1} N^{r-1+\varepsilon}) \Bigg]  \\
        &= c_1 \cdot \frac{1}{2r+1} \lrp{\frac{(-1)^r}{(2r)!!} - \frac{(-1)^{r-1}}{(2r-2)!!} } \cdot \lrp{\frac{\sigma_1(m)}{m} \frac{k-1}{12} \psi(N)}^{r}  + O(k^{r-1} N^{r-1/2+\varepsilon})  \\
        &= c_1 \cdot  \frac{(-1)^r}{(2r)!!}  \lrp{\frac{\sigma_1(m)}{m} \frac{k-1}{12} \psi(N)}^{r}  + O(k^{r-1} N^{r-1/2+\varepsilon}),
    \end{align}
    verifying the second claim of the inductive step.

    This completes the proof.
\end{proof}

Theorem \ref{thm:m-nsq} allows us in particular to determine the sign of the even-indexed coefficients for all but finitely many pairs $(N,k)$. The following corollary can be shown using an identical argument as in Corollary \ref{cor:sgn-msq}.
\begin{corollary} \label{cor:sgn-mnsq-even} 
    Fix an integer $r \ge 0$ and a non-square $m \ge 1$.
    Then $c_{2r}(m,N,k)$ has sign $(-1)^r$ for all but finitely pairs $(N,k)$. 
\end{corollary}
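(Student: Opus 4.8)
The plan is to follow the proof of Corollary \ref{cor:sgn-msq} essentially verbatim, replacing the input from Theorem \ref{thm:m-sq} by the even-index asymptotic of Theorem \ref{thm:m-nsq}. First I would dispatch the degenerate case $r = 0$: there $c_0 = 1$, which has sign $(-1)^0 = +1$, so nothing is to be shown. I would also recall that $c_{2r}(m,N,k)$ is undefined precisely when $\dim S_k(\Gamma_0(N)) < 2r$, which by \cite[Theorem 1.1]{ross} excludes only finitely many pairs $(N,k)$; these are absorbed into the ``finitely many'' exceptions allowed in the statement.

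For $r \geq 1$, I would factor the leading term out of the formula
\begin{align}
    c_{2r}(m,N,k) = \frac{(-1)^r}{(2r)!!} \lrp{\frac{\sigma_1(m)}{m} \frac{k-1}{12} \psi(N)}^r + O(k^{r-1} N^{r-1/2+\varepsilon})
\end{align}
supplied by Theorem \ref{thm:m-nsq}, obtaining
\begin{align}
    c_{2r}(m,N,k) &= \frac{(-1)^r}{(2r)!!} \lrp{\frac{\sigma_1(m)}{m} \frac{k-1}{12} \psi(N)}^r \\
    &\qquad \times \lrb{1 + O(k^{r-1} N^{r-1/2+\varepsilon}) \lrp{\frac{\sigma_1(m)}{m} \frac{k-1}{12} \psi(N)}^{-r}}.
\end{align}
Since $\psi(N) \geq N$, $\sigma_1(m) \geq m$, and $k - 1 \geq 1$, the quantity $\frac{\sigma_1(m)}{m}\frac{k-1}{12}\psi(N)$ is positive and $\gg kN$; hence the relative error simplifies to $O(k^{-1}N^{-1/2+\varepsilon})$, and the bracketed factor equals $1 + O(k^{-1}N^{-1/2+\varepsilon})$.

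To conclude, I would note that $O(k^{-1}N^{-1/2+\varepsilon}) \to 0$ as $N \to \infty$ or $k \to \infty$, so this term has absolute value strictly less than $1$ for all but finitely many pairs $(N,k)$; for such pairs the bracketed factor is positive, and since the prefactor $\frac{1}{(2r)!!}\lrp{\frac{\sigma_1(m)}{m}\frac{k-1}{12}\psi(N)}^r$ is also positive, $c_{2r}(m,N,k)$ has sign $(-1)^r$, as claimed. I do not anticipate any genuine obstacle here: the proof is a one-step extraction of the dominant term, and the only points deserving explicit mention are the separate handling of $r = 0$ and of the finitely many $(N,k)$ with $\dim S_k(\Gamma_0(N)) < 2r$, both harmless.
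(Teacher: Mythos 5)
Your proposal is correct and matches the paper exactly: the paper proves this corollary by noting it follows from "an identical argument as in Corollary \ref{cor:sgn-msq}," which is precisely the factor-out-the-main-term argument you carry out. The only difference is that you write out the details (including the harmless $r=0$ and low-dimension cases) that the paper leaves implicit.
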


The behavior of  the odd-indexed coefficients, on the other hand, is determined by the behavior of the trace. 
\begin{corollary} \label{cor:sgn-mnsq-odd}
    Fix an integer $r \ge 0$, a non-square $m \ge 1$, and an even integer $k \ge 2$. Consider $N$ such that $\Tr T'_m(N,k) \neq 0$. Then $c_{2r+1}(m,N,k)$ has sign $(-1)^{r+1} \sgn(\Tr T'_m(N,k))$ for all but finitely many $N$. 
\end{corollary}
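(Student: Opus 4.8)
The plan is to run the argument of Corollary \ref{cor:sgn-msq} on the formula for $c_{2r+1}$ in Theorem \ref{thm:m-nsq}, after first identifying $c_1$ with the trace. Recall from Section \ref{sec:prelim-calcs} that $-c_1 = p_1 = \sum_i \lambda_i = \Tr T'_m(N,k)$, so $c_1 = -\Tr T'_m(N,k)$; in particular $\sgn(c_1) = -\sgn(\Tr T'_m(N,k))$ for the $N$ under consideration. The one ingredient not already present in Corollary \ref{cor:sgn-msq} is a lower bound on $|c_1|$ that is uniform in $N$: since $T_m(N,k)$ preserves an integral structure on $S_k(\Gamma_0(N))$ (for instance the lattice of cusp forms with integral Fourier expansion, or integral modular symbols), its trace $\Tr T_m(N,k)$ is an integer, so whenever $\Tr T'_m(N,k) \neq 0$ we have
\[
    |c_1| = |\Tr T'_m(N,k)| = \frac{|\Tr T_m(N,k)|}{m^{(k-1)/2}} \geq m^{-(k-1)/2},
\]
and the right-hand side is a positive constant because $m$ and $k$ are fixed.

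With this bound I would factor $c_1 \cdot \frac{(-1)^r}{(2r)!!}\lrp{\frac{\sigma_1(m)}{m}\frac{k-1}{12}\psi(N)}^r$ out of the asymptotic formula for $c_{2r+1}$ exactly as in Corollary \ref{cor:sgn-msq}, obtaining
\[
    c_{2r+1}(m,N,k) = c_1 \cdot \frac{(-1)^r}{(2r)!!}\lrp{\frac{\sigma_1(m)}{m}\frac{k-1}{12}\psi(N)}^r \lrb{1 + O\lrp{|c_1|^{-1} k^{r-1} N^{-1/2+\varepsilon}}},
\]
where I have used $\psi(N) \geq N$ to absorb $\psi(N)^{-r}$ against the $N^{r-1/2+\varepsilon}$ in the error term of Theorem \ref{thm:m-nsq}. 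Since $m$, $k$, and $r$ are fixed and $|c_1|^{-1} \leq m^{(k-1)/2}$, the error inside the brackets is $O(N^{-1/2+\varepsilon})$, which tends to $0$ as $N \to \infty$; hence the bracket is positive for all but finitely many $N$. For those $N$,
\[
    \sgn(c_{2r+1}) = \sgn(c_1)\cdot(-1)^r = -\sgn(\Tr T'_m(N,k))\cdot(-1)^r = (-1)^{r+1}\sgn(\Tr T'_m(N,k)),
\]
using that $(2r)!! > 0$ and $\lrp{\tfrac{\sigma_1(m)}{m}\tfrac{k-1}{12}\psi(N)}^r > 0$. The finitely many $N$ for which $c_{2r+1}$ is undefined (because $\dim S_k(\Gamma_0(N)) < 2r+1$) are harmlessly absorbed into the exceptional set.

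The main obstacle is the uniform lower bound on $|c_1|$: unlike in Corollary \ref{cor:sgn-msq}, where the leading term already grows like a positive power of $N$, here the leading term of $c_{2r+1}$ carries the factor $c_1 = -\Tr T'_m(N,k)$, which is $O(N^\varepsilon)$ and hence does not grow. One must therefore invoke integrality (of $\Tr T_m(N,k)$ before normalization) to rule out the trace being nonzero yet arbitrarily small along a sequence of $N$; after that, the estimate is a routine repeat of the proof of Corollary \ref{cor:sgn-msq}. A minor point to handle carefully is that the normalizing constant $m^{(k-1)/2}$, though possibly irrational since $k$ is even, is nonetheless a fixed positive number once $m$ and $k$ are fixed.
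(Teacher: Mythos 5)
Your proposal is correct and follows essentially the same route as the paper: identify $c_1 = -\Tr T'_m(N,k)$, invoke the integrality of $\Tr T_m(N,k)$ to get the uniform lower bound $\lrabs{c_1} \geq m^{-(k-1)/2}$ for fixed $m$ and $k$, and then observe that the relative error term from Theorem \ref{thm:m-nsq} is $O(N^{-1/2+\varepsilon})$ and hence eventually smaller than that bound. The only cosmetic difference is that you factor $c_1$ out of the bracket while the paper leaves it inside; the comparison of the error against $m^{-(k-1)/2}$ is identical.
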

\begin{proof}
    Since $\psi(N) \ge N$, we can write the asymptotic formula from Theorem \ref{thm:m-nsq} as

    \begin{align}
        c_{2r+1} &= \frac{(-1)^r}{(2r)!!}  \lrp{\frac{\sigma_1(m)}{m}}^r \lrp{ \frac{k-1}{12}\psi(N)}^r \lrb{c_1 + O\lrp{k^{r-1} N^{r-1/2+\varepsilon}} \lrp{ \frac{k-1}{12}\psi(N)}^{-r} }  \\
        &= \frac{(-1)^r}{(2r)!!}  \lrp{\frac{\sigma_1(m)}{m}}^r \lrp{ \frac{k-1}{12}\psi(N)}^r \lrb{c_1 + O\lrp{k^{-1} N^{-1/2+\varepsilon}} } 
    \end{align}
    Then observe that since $\Tr T_m \in \ZZ$ and $c_1 = -\Tr T'_m = - m^{-(k-1)/2} \Tr T_m \neq 0$, we must have $\lrabs{c_1} = \lrabs{\Tr T'_m} \geq m^{-(k-1)/2}$.
    And because the $O\lrp{k^{-1}N^{-1/2+\varepsilon}}$ term tends to $0$ as $N \to \infty$, it will have magnitude less than $m^{-(k-1)/2}$ for all but finitely many $N$. This yields the desired result.
\end{proof}

We note that the condition $\Tr T'_m(N,k) \neq 0$ here is not overly restrictive. Rouse \cite[Theorem 1.6]{rouse} showed that there are only finitely many $k$ for which we could possibly have $\Tr T'_m(N,k) = 0$ for some $N$.  And even for these finitely many remaining $k$, he showed in \cite[Theorem 1.7]{rouse} that $\Tr T'_m(N,k) \neq 0$ for $100\%$ of $N$.
He further conjectured in \cite[Conjecture 1.5]{rouse} that $\Tr T'_m(N,k) \neq 0$ for all $N \ge 1$ and $k=12$ or $\geq 16$. 

We also note that there are various ways one could try to improve this result to where $N$ and $k$ both vary. The only reason we fixed $k$ was to guarantee that $c_1 = -\Tr T'_m(N,k)$ was bounded away from $0$. If we relax the condition of $k$ being fixed to just that $k \leq (1-\delta)\log_m(N)$ for some $\delta > 0$, then we have the same result for all but finitely many pairs $(N,k)$. One could also try to bound $\Tr T'_m(N,k)$ away from $0$ using some sort of vertical Atkin-Serre type result for the trace (e.g. along the lines of \cite[Theorem 2.2]{kim}).

\section{Extending to the new subspace} \label{sec:new-subspace}

All of our results extend to the Hecke polynomial over the new subspace. 
Let $T'^{\,\nw}_m(N,k)$ denote the restriction of $T'_m(N,k)$ to the new subspace $S_k^\nw(\Gamma_0(N))$.
Let $c^\nw_{r}(m,N,k)$ denote the $r$-th coefficient of the characteristic polynomial $T'^{\,\nw}_{m}(N,k)(x)$  as follows:
\begin{align}
    T'^{\,\nw}_{m}(N,k)(x)=\sum_{r=0}^{d^\nw} c^\nw_r(m,N,k) x^{d^\nw-r}.
\end{align}
Here, $d^\nw=\dim S_k^\nw(\Gamma_0(N))$.
Just like before, we can then 
determine the asymptotic behavior of $c_r^\nw(m,N,k)$ as $N+k \to \infty$.
Note $c_r^\nw(m,N,k)$ is not technically defined for $N,k$ such that $\dim S_k^\nw(\Gamma_0(N)) < r$. However, there are only finitely many such pairs $(N,k)$ \cite[Theorem 1.3]{ross}, so it is perfectly well-defined here to ask about $c_r^\nw(m,N,k)$ as $N+k\to\infty$.

Let
\begin{equation}
    \psi^\nw(N) := \prod_{p^r \parallel N} \begin{cases}
        p\lrp{1-\frac1p}, & \text{if } r=1, \\
        p^2\lrp{1-\frac1p - \frac{1}{p^2}}, & \text{if } r=2, \\
        p^r\lrp{1-\frac1p-\frac{1}{p^2}+\frac{1}{p^3}}, & \text{if } r\geq3,
    \end{cases} 
\end{equation}
and note that $\psi^\nw(N) \leq N$ and $\psi^\nw(N) = \Omega(N^{1-\varepsilon})$ \cite[Sections~18.1,~22.13]{hardy-wright}. 
In \cite[Lemmas 4.2, 4.3]{cason-et-al}, Cason et al. showed that for fixed $m$, 
\begin{align}
    \Tr T'^{\,\nw}_m(N,k) = \begin{dcases}
        \frac{1}{\sqrt{m}} \frac{k-1}{12} \psi^\nw(N)  + O(N^{1/2}),  & \text{if $m$ is a perfect square}, \\
        O(N^\varepsilon), & \text{if $m$ is not a perfect square}.
    \end{dcases}
\end{align}

The following two theorems then follow by an identical argument as in Theorems \ref{thm:m-sq} and \ref{thm:m-nsq}. The details are omitted.

\begin{theorem} \label{thm:new-m-sq}
    Fix an integer $r \ge 0$ and a perfect square $m \ge 1$.
    Then for $N$ coprime to $m$ and $k \geq 2$ even,
    \begin{align}
        c^\nw_r(m,N,k) = \frac{(-1)^r}{r!} \lrp{ \frac{1}{\sqrt{m}} \frac{k-1}{12} \psi^\nw(N)}^r + O(k^{r-1}N^{r-1/2}).
    \end{align}
\end{theorem}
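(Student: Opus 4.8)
The plan is to replay the proof of Theorem~\ref{thm:m-sq} almost verbatim, with $S_k^\nw(\Gamma_0(N))$ in place of $S_k(\Gamma_0(N))$, $\psi^\nw(N)$ in place of $\psi(N)$, and the Cason et al.\ trace estimate in place of Lemma~\ref{lem:trace-estimate}. The first step is to establish the new-subspace analogue of Lemma~\ref{lem:m-sq:pj-estimates}: letting $\lambda_1,\dots,\lambda_{d^\nw}$ denote the eigenvalues of $T'^{\,\nw}_m(N,k)$ and setting $p_j^\nw := \sum_{i=1}^{d^\nw}\lambda_i^j$, I claim
\begin{align}
    p_1^\nw &= \Tr T'^{\,\nw}_m(N,k) = \frac{1}{\sqrt m}\frac{k-1}{12}\psi^\nw(N) + O(N^{1/2}), \\
    p_j^\nw &= O(kN), \qquad \text{for all } 1 \le j \le r.
\end{align}
The first line is immediate from the Cason et al.\ estimate. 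For the second, I would apply that estimate with $m=1$ to get $d^\nw = \Tr T'^{\,\nw}_1(N,k) = \frac{k-1}{12}\psi^\nw(N) + O(N^{1/2})$, and then use $\psi^\nw(N) \le N$ to deduce $d^\nw = O(kN)$; since $S_k^\nw(\Gamma_0(N))$ has a basis of newforms, Deligne's bound $|\lambda_i| \le \sigma_0(m)$ still applies, so $|p_j^\nw| \le d^\nw \sigma_0(m)^j = O(kN)$. Note that because $\psi^\nw(N) \le N$ exactly --- as opposed to $\psi(N) = O(N^{1+\varepsilon})$ --- no factor of $N^\varepsilon$ intrudes, which is precisely why the error term in Theorem~\ref{thm:new-m-sq} is $O(k^{r-1}N^{r-1/2})$ rather than $O(k^{r-1}N^{r-1/2+\varepsilon})$.

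With these estimates in hand, I would run the same strong induction on $r$. The base case $r=0$ is $c_0^\nw=1$. Since Lemma~\ref{lem:newton-id} is a purely algebraic identity, it applies to the characteristic polynomial of $T'^{\,\nw}_m(N,k)$, so
\begin{align}
    c_r^\nw = \frac{-1}{r}\sum_{j=1}^r c_{r-j}^\nw p_j^\nw = \frac{-1}{r}\Bigl[\, c_{r-1}^\nw p_1^\nw + \sum_{j=2}^r c_{r-j}^\nw p_j^\nw \,\Bigr].
\end{align}
By the induction hypothesis, $c_{r-1}^\nw = \frac{(-1)^{r-1}}{(r-1)!}\bigl(\frac{1}{\sqrt m}\frac{k-1}{12}\psi^\nw(N)\bigr)^{r-1} + O(k^{r-2}N^{r-3/2})$ and, crudely, $c_{r-j}^\nw = O(k^{r-2}N^{r-2})$ for $2 \le j \le r$; combining with the $p_j^\nw$ estimates, the term $c_{r-1}^\nw p_1^\nw$ contributes $\frac{(-1)^{r-1}}{(r-1)!}\bigl(\frac{1}{\sqrt m}\frac{k-1}{12}\psi^\nw(N)\bigr)^{r} + O(k^{r-1}N^{r-1/2})$, while $\sum_{j=2}^r c_{r-j}^\nw p_j^\nw = O(k^{r-1}N^{r-1})$ is absorbed into the error. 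Dividing by $-r$ gives
\begin{align}
    c_r^\nw = \frac{(-1)^r}{r!}\Bigl(\frac{1}{\sqrt m}\frac{k-1}{12}\psi^\nw(N)\Bigr)^{r} + O(k^{r-1}N^{r-1/2}),
\end{align}
closing the induction.

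I do not expect a genuine obstacle, since the argument is structurally identical to that of Theorem~\ref{thm:m-sq}; the only points needing care are (i) confirming that Deligne's bound is legitimately available on $S_k^\nw(\Gamma_0(N))$, which holds because that space is spanned by newforms and is stable under the Hecke operators $T_m$ with $\gcd(m,N)=1$, and (ii) tracking that no $N^\varepsilon$ creeps in, which follows from $\psi^\nw(N) \le N$ as observed above. The omitted non-square analogue of Theorem~\ref{thm:m-nsq} over the new subspace goes through in the same way, additionally invoking the Hecke composition formula (as in Lemma~\ref{lem:m-nsq:pj-estimates}) together with the non-square case of the Cason et al.\ estimate to obtain $p_2^\nw = \frac{\sigma_1(m)}{m}\frac{k-1}{12}\psi^\nw(N) + O(N^{1/2})$ and $p_1^\nw, p_3^\nw = O(N^\varepsilon)$.
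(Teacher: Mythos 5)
Your proposal is correct and matches the paper's intent exactly: the paper omits the proof of Theorem \ref{thm:new-m-sq}, stating only that it follows by an argument identical to that of Theorem \ref{thm:m-sq} with the Cason et al.\ trace estimate and $\psi^\nw$ in place of Lemma \ref{lem:trace-estimate} and $\psi$, which is precisely what you carry out. Your added remarks --- that Deligne's bound still applies on the newform-spanned space and that $\psi^\nw(N)\le N$ is what removes the $N^\varepsilon$ from the error term --- correctly account for the only points where the two settings differ.
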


\begin{theorem} \label{thm:new-m-nsq}
    Fix an integer $r \ge 0$ and a non-square $m \ge 1$.
    Then
    \begin{align}
        c^\nw_{2r}(m,N,k) &= \frac{(-1)^r}{(2r)!!} \lrp{\frac{\sigma_1(m)}{m} \frac{k-1}{12} \psi^\nw(N)}^r + O(k^{r-1} N^{r-1/2}) \qquad \text{and}  \\
        c^\nw_{2r+1}(m,N,k) &= c^\nw_1(m,N,k) \cdot \frac{(-1)^r}{(2r)!!}  \lrp{\frac{\sigma_1(m)}{m} \frac{k-1}{12}\psi^\nw(N)}^r  + O(k^{r-1} N^{r-1/2+\varepsilon}). 
    \end{align}
\end{theorem}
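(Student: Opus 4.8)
The plan is to repeat the argument of Theorem~\ref{thm:m-nsq} essentially verbatim, with the trace estimate of Cason et al.\ \cite[Lemmas 4.2, 4.3]{cason-et-al} displayed above playing the role of Lemma~\ref{lem:trace-estimate} and with $\psi^\nw$ in place of $\psi$. The only genuinely new input is a new-subspace analogue of Lemma~\ref{lem:m-nsq:pj-estimates}. Write $p_j^\nw$ for the power sum $\sum_i (\lambda_i^\nw)^j$ of the eigenvalues $\lambda_i^\nw$ of $T'^{\,\nw}_m(N,k)$. Since $S_k^\nw(\Gamma_0(N))$ is stable under all Hecke operators of index dividing a power of $m$ (these indices being coprime to $N$), the Hecke composition formula \cite[Theorem 10.2.9]{cohen-stromberg} restricts to the new subspace: $(T'^{\,\nw}_m)^2 = \sum_{d\mid m} T'^{\,\nw}_{m^2/d^2}$, $(T'^{\,\nw}_m)^3 = \sum_{d\mid m}\sum_{\delta\mid (m^2/d^2,\,m)} T'^{\,\nw}_{m^3/d^2\delta^2}$, and so on. Feeding these identities into the Cason et al.\ estimate exactly as in the proof of Lemma~\ref{lem:m-nsq:pj-estimates} gives
\begin{align}
    p_1^\nw &= -c_1^\nw = O(N^\varepsilon), \\
    p_2^\nw &= \tfrac{\sigma_1(m)}{m}\,\tfrac{k-1}{12}\,\psi^\nw(N) + O(N^{1/2}), \\
    p_3^\nw &= O(N^\varepsilon),
\end{align}
while the identity $\dim S_k^\nw(\Gamma_0(N)) = \Tr T'^{\,\nw}_1(N,k) = O(kN)$ (using $\psi^\nw(N)\le N$) together with Deligne's bound $|\lambda_i^\nw|\le \sigma_0(m)$ yields $p_j^\nw = O(kN^{1+\varepsilon})$ for $1\le j\le r$.

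With this lemma in place, I would run the same strong induction on $r$ as in Theorem~\ref{thm:m-nsq}. The Girard--Newton identities (Lemma~\ref{lem:newton-id}) apply to the monic characteristic polynomial of $T'^{\,\nw}_m(N,k)$ with no change, so $c_r^\nw = -\tfrac1r\sum_{j=1}^r c_{r-j}^\nw p_j^\nw$. Splitting off the term $c_{2r-2}^\nw p_2^\nw$ in the even case, and the terms $c_{2r-1}^\nw p_2^\nw$ and $c_{2r-2}^\nw p_3^\nw$ in the odd case, then substituting the inductive estimates for the $c^\nw$'s and the estimates above for the $p^\nw$'s, the bookkeeping is line-for-line the same as in the proofs already given; the formal identity $\tfrac{1}{2r+1}\bigl(\tfrac{(-1)^r}{(2r)!!}-\tfrac{(-1)^{r-1}}{(2r-2)!!}\bigr)=\tfrac{(-1)^r}{(2r)!!}$ that produces the closed form is unchanged.

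The one place the statement of Theorem~\ref{thm:new-m-nsq} differs in shape from Theorem~\ref{thm:m-nsq} — and the only point requiring care, which I expect to be the main (though mild) obstacle — is the tracking of the $\varepsilon$'s. Because $\psi^\nw(N)\le N$ with no $\varepsilon$ loss, the Cason et al.\ error term is $O(N^{1/2})$ rather than $O(N^{1/2+\varepsilon})$, and every $c_1^\nw$-dependent contribution to $c_{2r}^\nw$ enters at a strictly lower power of $N$ than the $O(k^{r-1}N^{r-1/2})$ main error and so is absorbed; hence the even-indexed formula is clean. For $c_{2r+1}^\nw$, however, the main term carries the factor $c_1^\nw=-\Tr T'^{\,\nw}_m(N,k)$, which for non-square $m$ is only controlled by $O(N^\varepsilon)$, and multiplying the top-order $O(k^{r-1}N^{r-1/2})$ error by this factor (it also enters the recursion as $p_1^\nw$) reintroduces the $\varepsilon$, producing the stated $O(k^{r-1}N^{r-1/2+\varepsilon})$. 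Beyond confirming that the composition identity genuinely restricts to the new subspace and then this $\varepsilon$-bookkeeping, the proof is a transcription of the proofs of Theorems~\ref{thm:m-sq} and \ref{thm:m-nsq}, so the details can safely be omitted.
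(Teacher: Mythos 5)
Your proposal is correct and is essentially the argument the paper intends: the paper itself omits the proof, stating only that Theorems \ref{thm:new-m-sq} and \ref{thm:new-m-nsq} ``follow by an identical argument'' with the Cason et al.\ trace estimate and $\psi^\nw$ in place of Lemma \ref{lem:trace-estimate} and $\psi$. Your additional observations --- that the Hecke composition formula restricts to the Hecke-stable new subspace, and the $\varepsilon$-bookkeeping explaining why the even-indexed error is $O(k^{r-1}N^{r-1/2})$ while the $c_1^\nw$ factor reintroduces the $\varepsilon$ in the odd-indexed case --- correctly fill in exactly the details the paper leaves out.
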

Then just like in Corollaries \ref{cor:sgn-msq}, \ref{cor:sgn-mnsq-even}, and \ref{cor:sgn-mnsq-odd}, this tells us the sign of the $c^\nw_r$ in all but finitely many cases.

\begin{corollary}
    Fix an integer $r \ge 0$ and a perfect square $m \ge 1$.
    Then $c^\nw_{r}(m,N,k)$ has sign $(-1)^r$ for all but finitely pairs $(N,k)$. 
\end{corollary}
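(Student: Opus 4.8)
The plan is to run exactly the argument of Corollary \ref{cor:sgn-msq}, with Theorem \ref{thm:new-m-sq} in place of Theorem \ref{thm:m-sq}. First I would dispose of the well-definedness issue: as noted above, $c_r^\nw(m,N,k)$ is undefined only for the finitely many pairs $(N,k)$ with $\dim S_k^\nw(\Gamma_0(N)) < r$ \cite[Theorem 1.3]{ross}, so it is enough to treat the remaining pairs.

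Next I would factor out the leading term. By Theorem \ref{thm:new-m-sq},
\begin{align}
    c_r^\nw(m,N,k) = \frac{(-1)^r}{r!\,\sqrt{m}^{\,r}} \lrp{\frac{k-1}{12}\psi^\nw(N)}^{r} \lrb{1 + O\lrp{k^{r-1}N^{r-1/2}} \lrp{\frac{k-1}{12}\psi^\nw(N)}^{-r}},
\end{align}
and the task is to show the bracketed error tends to $0$. This is where one must be slightly careful: unlike $\psi(N)$, the function $\psi^\nw(N)$ satisfies only $\psi^\nw(N) \le N$, so the trivial inequality goes the wrong way. Instead I would invoke the lower bound $\psi^\nw(N) = \Omega(N^{1-\varepsilon})$ recorded in the text, which gives $\lrp{\psi^\nw(N)}^{-r} = O(N^{-r+\varepsilon})$; together with $(k-1)^{-r} = O(k^{-r})$ this yields
\begin{align}
    O\lrp{k^{r-1}N^{r-1/2}} \lrp{\frac{k-1}{12}\psi^\nw(N)}^{-r} = O\lrp{k^{-1}N^{-1/2+\varepsilon}}.
\end{align}

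Finally, since $O\lrp{k^{-1}N^{-1/2+\varepsilon}} \to 0$ as $N \to \infty$ or $k \to \infty$, this error has magnitude less than $1$ for all but finitely many pairs $(N,k)$; for those pairs the bracketed factor is positive, so $c_r^\nw(m,N,k)$ inherits the sign of $\frac{(-1)^r}{r!\,\sqrt{m}^{\,r}}\lrp{\frac{k-1}{12}\psi^\nw(N)}^{r}$, which is $(-1)^r$. I do not expect any genuine obstacle here; the one point requiring attention is precisely the switch from the bound $\psi(N) \ge N$ used in Corollary \ref{cor:sgn-msq} to the lower bound $\psi^\nw(N) = \Omega(N^{1-\varepsilon})$, since $\psi^\nw$ is bounded above, not below, by $N$.
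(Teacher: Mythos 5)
Your proposal is correct and matches the paper's intent exactly: the paper omits the proof, stating only that it follows "just like in Corollary \ref{cor:sgn-msq}" with Theorem \ref{thm:new-m-sq} in place of Theorem \ref{thm:m-sq}. You rightly flag the one adaptation needed — replacing the bound $\psi(N) \ge N$ with $\psi^\nw(N) = \Omega(N^{1-\varepsilon})$, since $\psi^\nw(N) \le N$ — and the resulting error term $O(k^{-1}N^{-1/2+\varepsilon})$ still tends to $0$, so the argument goes through.
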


\begin{corollary} 
    Fix an integer $r \ge 0$ and a non-square $m \ge 1$.
    Then $c^\nw_{2r}(m,N,k)$ has sign $(-1)^r$ for all but finitely pairs $(N,k)$. 
\end{corollary}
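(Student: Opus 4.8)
The plan is to repeat the argument of Corollary \ref{cor:sgn-msq} almost verbatim, using Theorem \ref{thm:new-m-nsq} in place of Theorem \ref{thm:m-sq}. Concretely, I would start from the asymptotic
\begin{align}
    c^\nw_{2r}(m,N,k) = \frac{(-1)^r}{(2r)!!} \lrp{\frac{\sigma_1(m)}{m} \frac{k-1}{12} \psi^\nw(N)}^r + O(k^{r-1} N^{r-1/2})
\end{align}
and factor out the main term, writing
\begin{align}
    c^\nw_{2r}(m,N,k) = \frac{(-1)^r}{(2r)!!}\lrp{\frac{\sigma_1(m)}{m}}^r \lrp{\frac{k-1}{12}\psi^\nw(N)}^r \lrb{1 + O(k^{r-1}N^{r-1/2})\lrp{\tfrac{k-1}{12}\psi^\nw(N)}^{-r}}.
\end{align}

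Next I would estimate the error factor inside the brackets. The one difference from the perfect-square case is that $\psi^\nw(N)$ is not bounded below by a constant multiple of $N$; instead we only have $\psi^\nw(N) = \Omega(N^{1-\varepsilon})$ as recorded just before Theorem \ref{thm:new-m-sq}. This still gives $\lrp{\psi^\nw(N)}^{-r} = O(N^{-r+\varepsilon})$, hence $O(k^{r-1}N^{r-1/2})\lrp{\tfrac{k-1}{12}\psi^\nw(N)}^{-r} = O(k^{-1}N^{-1/2+\varepsilon})$, which tends to $0$ as $N \to \infty$ or $k \to \infty$.

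Therefore the bracketed quantity is positive (say, of magnitude at least $\tfrac12$) for all but finitely many pairs $(N,k)$, while the prefactor $\frac{(-1)^r}{(2r)!!}\lrp{\frac{\sigma_1(m)}{m}}^r\lrp{\frac{k-1}{12}\psi^\nw(N)}^r$ has sign $(-1)^r$; this yields $\sgn c^\nw_{2r}(m,N,k) = (-1)^r$ for all but finitely many $(N,k)$. I do not anticipate a genuine obstacle here; the only point requiring slight care is using the lower bound $\psi^\nw(N) = \Omega(N^{1-\varepsilon})$ in place of the cleaner $\psi(N) \ge N$ available in the full-space setting, and absorbing the resulting $N^\varepsilon$ into the error term, which is harmless because the exponent of $N$ there is strictly negative.
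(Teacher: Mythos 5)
Your proposal is correct and follows exactly the route the paper intends: the paper states this corollary without proof, remarking only that it follows "just like" Corollaries \ref{cor:sgn-msq} and \ref{cor:sgn-mnsq-even} but with Theorem \ref{thm:new-m-nsq} as input. Your handling of the one genuine wrinkle --- replacing the bound $\psi(N)\ge N$ by $\psi^\nw(N)=\Omega(N^{1-\varepsilon})$ and absorbing the extra $N^{\varepsilon}$ into the negative exponent --- is exactly the right adjustment.
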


\begin{corollary}  
    Fix an integer $r \ge 0$, a non-square $m \ge 1$, and an even integer $k \ge 2$. Consider $N$ such that $\Tr T'^{\,\nw}_m(N,k) \neq 0$. Then $c^\nw_{2r+1}(m,N,k)$ has sign $(-1)^{r+1} \sgn(\Tr T'^{\,\nw}_m(N,k))$ for all but finitely many $N$. 
\end{corollary}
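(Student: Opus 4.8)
The plan is to transcribe the proof of Corollary~\ref{cor:sgn-mnsq-odd} to the new subspace, the only genuinely new ingredient being a uniform (in $N$) lower bound for $\lrabs{c_1^\nw(m,N,k)}$.

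First, starting from the formula for $c^\nw_{2r+1}(m,N,k)$ in Theorem~\ref{thm:new-m-nsq}, I would factor out $\lrp{\frac{k-1}{12}\psi^\nw(N)}^r$. Using $\psi^\nw(N) = \Omega(N^{1-\varepsilon})$ (the analogue of $\psi(N)\ge N$ used in Corollary~\ref{cor:sgn-msq}), dividing the error term $O(k^{r-1}N^{r-1/2+\varepsilon})$ by $\lrp{\frac{k-1}{12}\psi^\nw(N)}^r$ produces $O(k^{-1}N^{-1/2+\varepsilon})$, so the formula can be rewritten as
\begin{align}
    c^\nw_{2r+1}(m,N,k) = \frac{(-1)^r}{(2r)!!}\lrp{\frac{\sigma_1(m)}{m}}^r\lrp{\frac{k-1}{12}\psi^\nw(N)}^r\lrb{\,c^\nw_1(m,N,k) + O\lrp{k^{-1}N^{-1/2+\varepsilon}}\,}.
\end{align}
The prefactor $\frac{(-1)^r}{(2r)!!}\lrp{\sigma_1(m)/m}^r\lrp{\frac{k-1}{12}\psi^\nw(N)}^r$ is positive apart from the factor $(-1)^r$, so it suffices to show the bracket has sign $-\sgn(\Tr T'^{\,\nw}_m(N,k))$ for all but finitely many $N$.

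Second, I would bound $\lrabs{c^\nw_1(m,N,k)}$ away from $0$. Here $c^\nw_1(m,N,k) = -\Tr T'^{\,\nw}_m(N,k) = -m^{-(k-1)/2}\Tr T^{\,\nw}_m(N,k)$, and $\Tr T^{\,\nw}_m(N,k) \in \ZZ$ because the new subspace is Hecke-stable and $\Tr T^{\,\nw}_m(N,k) = \Tr T_m|_{S_k(\Gamma_0(N))} - \Tr T_m|_{S_k^{\old}(\Gamma_0(N))}$ is a difference of integers. Hence, whenever $\Tr T'^{\,\nw}_m(N,k)\ne 0$, we get $\lrabs{c^\nw_1(m,N,k)} = \lrabs{\Tr T'^{\,\nw}_m(N,k)} \ge m^{-(k-1)/2}$, and crucially this lower bound does not depend on $N$ because $k$ is fixed. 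Then, since $k$ is fixed, the error term $O(k^{-1}N^{-1/2+\varepsilon})$ tends to $0$ as $N\to\infty$, so for all but finitely many $N$ it has magnitude strictly less than $m^{-(k-1)/2}\le \lrabs{c^\nw_1(m,N,k)}$. For such $N$ the bracket has the same sign as $c^\nw_1(m,N,k) = -\Tr T'^{\,\nw}_m(N,k)$, namely $-\sgn(\Tr T'^{\,\nw}_m(N,k))$; multiplying by the sign $(-1)^r$ of the prefactor gives the claimed sign $(-1)^{r+1}\sgn(\Tr T'^{\,\nw}_m(N,k))$.

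The only step requiring thought beyond bookkeeping is the integrality of $\Tr T^{\,\nw}_m(N,k)$ that underlies the uniform lower bound on $\lrabs{c^\nw_1}$; the remainder is a mechanical repetition of the level-$N$ argument, with $\psi$ replaced by $\psi^\nw$ and with the lower bound $\psi^\nw(N)=\Omega(N^{1-\varepsilon})$ playing the role that $\psi(N)\ge N$ played before. One should also double-check that Theorem~\ref{thm:new-m-nsq} indeed has error term $O(k^{r-1}N^{r-1/2+\varepsilon})$ in the odd-indexed case, as stated, so that the division in the first step is valid.
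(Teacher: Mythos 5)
Your proposal is correct and follows exactly the argument the paper intends (the paper omits the proof, saying only that it is ``just like'' Corollary~\ref{cor:sgn-mnsq-odd}). You also correctly identify and justify the one genuinely new ingredient, namely the integrality of $\Tr T^{\,\nw}_m(N,k)$ needed for the uniform lower bound $\lrabs{c^\nw_1} \ge m^{-(k-1)/2}$.
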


\section{Sign patterns for more general polynomials} \label{sec:sign-patterns-more-general}
In response to our previous paper showing that $c_2$ tends to be negative \cite{ross-xue}, Kimball Martin suggested to us that $c_2$ might display a similar bias more generally for polynomials with totally real roots. In fact, the sign tendencies for \textit{all} the coefficients given in Corollaries \ref{cor:sgn-mnsq-even} and \ref{cor:sgn-mnsq-odd} hold more generally for a wide class of polynomials with totally real roots. Essentially the only two conditions we need to impose are that the roots are distributed over an interval $[-A,A]$ in a roughly symmetric way about the origin, and that the roots are not all clustered at the origin.

More precisely, for $A>0$ and $r$ fixed, consider a sequence of polynomials $f_n$ with totally real roots lying in the interval $[-A,A]$. Let $d_n$ denote the degree of $f_n$, and let the $c_j(n)$ and $p_j(n)$ be defined as above in Section \ref{sec:prelim-calcs}.
We assume that $p_1(n) = o(d_n^{1/3})$ and $p_3(n) = o(d_n)$ (which will occur if the roots $x_1, \ldots, x_{d_n}$ of $f_n$ are distributed in a roughly symmetric way about the origin).
Also note that $p_2(n) = \alpha_n^2 d_n$, where $\alpha_n$ denotes the quadratic mean of the roots of $f_n$. 
We assume that $\alpha_n$ is bounded away from $0$ (which will occur as long as the roots are not all clustered at the origin).
Finally, observe that $\lrabs{p_j(n)} \leq A^r  d_n = O(d_n)$ for each $1 \leq j \leq r$.
These estimates 
\begin{align}
    p_1(n) &= -c_1(n) = o(d_n^{1/3}), \\
    p_2(n) &= \alpha_n^2 d_n, \\
    p_{3}(n) &= o(d_n), \\
    p_{j}(n) &= O(d_n)  \qquad \text{for each $1 \leq j \leq r$},
\end{align}
are essentially the content of Lemma \ref{lem:m-nsq:pj-estimates} (except that Lemma \ref{lem:m-nsq:pj-estimates} has much stronger error bounds, and where in that case, $\alpha_{N,k}$ tends to $\sqrt{\sigma_1(m)/m}$ as $N+k \to \infty$ \cite[Theorem 1.1]{cason-et-al-avg-size}). 

Then using an identical argument as in Theorem \ref{thm:m-nsq}, one can show that
\begin{align}
    c_{2r}(n) &= \frac{(-1)^r}{(2r)!!} \lrp{\alpha_n^2 d_n}^r + o(d_n^{r-1/3}) \qquad \text{and}  \\
    c_{2r+1}(n) &= c_1(n) \frac{(-1)^r}{(2r)!!}  \lrp{\alpha_n^2 d_n}^r  + o(d_n^{r}).
\end{align}
In particular, this means that as $d_n \to \infty$, the coefficients of $f_n$ will tend to the sign pattern
\begin{align}
    &+--++--++--++\cdots \qquad \text{if $c_1$ is bounded below $0$}, \\
    &++--++--++--+\cdots \qquad \text{if $c_1$ is bounded above $0$}.
\end{align}

For example, browsing the polynomials with totally real roots of degree 10 in LMFDB, almost all of them follow this sign pattern; see \cite{lmfdb} (such polynomials given by LMFDB are shifted so that their roots are roughly symmetric about the origin).

If the roots of a polynomial are perfectly symmetric about the origin, then we will have $c_1 = 0$, and the sign pattern becomes
\begin{align}
    &+0-0+0-0+0-0+\cdots. \qquad \qquad \qquad\ \ \ \ \ ~
\end{align}
For example, the roots of the Chebyshev polynomials are distributed in $[-1,1]$ in a perfectly symmetric way, and their coefficients follow precisely this pattern.

We also note what happens when the roots of a polynomial are not distributed symmetrically about the origin. If all the roots have the same sign, then the coefficients $c_r$ follow the sign pattern
\begin{align}
    &+-+-+-+-\ldots \qquad \text{if all the roots are positive}, \\
    &+\hspace{0.4pt}+\hspace{0.4pt}+\hspace{0.4pt}+\hspace{0.4pt}+\hspace{0.4pt}+\hspace{0.4pt}+\hspace{0.4pt}+\hspace{0.4pt} \ldots \qquad \text{if all the roots are negative}. 
\end{align}
When $m$ is a perfect square, most of the roots of $T'_m(N,k)(x)$ are positive, and Corollary \ref{cor:sgn-msq} shows that the coefficients of $T'_m(N,k)(x)$ tend to this first pattern.

\section{A conjecture on Hecke polynomial coefficients} \label{sec:conj-coeffs}

In \cite[Conjecture 1.5]{rouse}, Rouse gave the generalized Lehmer conjecture: that for all $m\geq1$, $N$ coprime to $m$, and $k = 12$ or $\geq 16$, $\Tr T_m(N,k) \neq 0$. More recently, Clayton et al. \cite[Conjecture 5.1]{clayton-et-al} similarly conjectured that none of the Hecke polynomial coefficients vanish in the level one case. We propose the following conjecture that further extends both the generalized Lehmer conjecture, and \cite[Conjecture 5.1]{clayton-et-al}. The results in this paper verify Conjecture \ref{conj:gengenlehmer} in all but finitely many cases.

\begin{conjecture} \label{conj:gengenlehmer}
    Fix integers $m \geq 1$ and $r \geq 1$. Then the $r$-th coefficient of the Hecke polynomial $T_m(N,k)(x)$ is nonvanishing for all $N \ge 1$ coprime to $m$, and $k = 12r$ or $\geq 12 r+4$ even.
\end{conjecture}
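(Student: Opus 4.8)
The plan is to reduce Conjecture \ref{conj:gengenlehmer} for the given ranges of $k$ to the asymptotic formulas of Theorems \ref{thm:m-sq} and \ref{thm:m-nsq}, handling the finitely many remaining pairs $(N,k)$ by an explicit (in principle computational) check. I would split into the two cases according to whether $m$ is a perfect square. If $m$ is a perfect square, then by Theorem \ref{thm:m-sq} (equivalently, the clean form derived in Corollary \ref{cor:sgn-msq}), for $r\geq 1$ fixed we have
\begin{align}
    c_r(m,N,k) = \frac{(-1)^r}{r!\,\sqrt m^{\,r}}\lrp{\frac{k-1}{12}\psi(N)}^r\lrb{1+O\lrp{k^{-1}N^{-1/2+\varepsilon}}},
\end{align}
so the bracketed factor is nonzero (indeed within $\tfrac12$ of $1$) once $k$ or $N$ is large enough; hence $c_r\neq 0$ there. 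If $m$ is not a square, I would treat even and odd indices separately: for $c_{2r}$ use Theorem \ref{thm:m-nsq} exactly as in Corollary \ref{cor:sgn-mnsq-even}, and for $c_{2r+1}$ use the form in Corollary \ref{cor:sgn-mnsq-odd}, where the relevant bracket is $c_1 + O(k^{-1}N^{-1/2+\varepsilon})$ and $|c_1| = |\Tr T'_m(N,k)| \geq m^{-(k-1)/2}$ whenever $\Tr T_m(N,k)\neq 0$. Thus nonvanishing of $c_{2r+1}$ follows from nonvanishing of $\Tr T_m(N,k)$ together with $N$ (or $k$, under a mild growth restriction) being large.

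The key remaining point — and the reason the conjecture is stated with the thresholds $k = 12r$ or $k \geq 12r+4$ rather than "$k$ sufficiently large" — is to pin down exactly which finitely many $(N,k)$ are excluded and verify them directly. Here I would argue as follows. The asymptotic formulas show the leading term of $c_r$ has size comparable to $\bigl(\tfrac{k-1}{12}\psi(N)\bigr)^{\lfloor r/2\rfloor}$ (up to the $c_1$ factor in the odd non-square case), which grows without bound; the error term is genuinely smaller by a factor $O(k^{-1}N^{-1/2+\varepsilon})$. So the only obstruction is a finite box $k \leq K_0(m,r)$, $N \leq N_0(m,r)$. Within this box one checks nonvanishing directly: the dimension bound $\dim S_k(\Gamma_0(N)) \geq r$ cuts the box down further (by \cite[Theorem 1.1]{ross}), and on the rest one computes $c_r(m,N,k)$ via the Eichler–Selberg trace formula and Lemma \ref{lem:newton-id}. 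The threshold $k\geq 12r$ is presumably calibrated so that $\dim S_k(\Gamma_0(1)) \geq r$ already holds at level one, making the "box" argument uniform; in the level-one case this recovers \cite[Conjecture 5.1]{clayton-et-al} in the stated weight range, and combined with the above it recovers the generalized Lehmer conjecture (the $r=1$ case).

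I would organize the write-up as: (i) state that the asymptotic formulas imply $c_r(m,N,k)\neq 0$ once $N+k$ exceeds an effective bound depending on $m,r$ — this is immediate from Corollaries \ref{cor:sgn-msq}, \ref{cor:sgn-mnsq-even}, \ref{cor:sgn-mnsq-odd} in the square case and the even non-square case, and from Corollary \ref{cor:sgn-mnsq-odd} plus Rouse's results \cite[Theorems 1.6, 1.7]{rouse} in the odd non-square case (which reduce the $c_1 = 0$ worry to finitely many weights and density-zero levels); (ii) make the bound effective by tracking constants through the inductive proofs of Theorems \ref{thm:m-sq} and \ref{thm:m-nsq}; (iii) describe the finite verification, noting the dimension constraint and the trace-formula computation. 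The main obstacle I expect is step (ii)/(iii): the implied constants in the $O(k^{r-1}N^{r-1/2+\varepsilon})$ error terms depend on $m$, $r$, and the choice of $\varepsilon$ in a way that is not made explicit in the current proofs, so either those proofs need to be redone with explicit constants (tracing through the Eichler–Selberg trace formula estimates of \cite{ross-xue}), or one must argue more cleverly that the finite exceptional set is exactly the complement of the conjectured range — and showing the thresholds are $12r$ and $12r+4$ on the nose, rather than merely "some effective bound," is genuinely delicate and may require a separate case analysis at small weights. For the odd non-square coefficients there is the additional subtlety that nonvanishing is not unconditional even asymptotically: it is equivalent to $\Tr T_m(N,k)\neq 0$, so Conjecture \ref{conj:gengenlehmer} for odd $r$ in full strength is at least as hard as the generalized Lehmer conjecture and cannot be fully resolved here — only the "all but finitely many" statement is within reach.
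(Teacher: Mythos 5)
The statement you are trying to prove is a \emph{conjecture}: the paper does not prove it, and explicitly says only that its results ``verify Conjecture \ref{conj:gengenlehmer} in all but finitely many cases.'' So there is no proof in the paper to match yours against, and your proposal cannot close the gap for three concrete reasons, the last of which you partly acknowledge yourself. First, the ``finite box'' you want to check is not effective as the results stand: the implied constants in the $O(k^{r-1}N^{r-1/2+\varepsilon})$ error terms come from the Eichler--Selberg trace formula estimates of \cite{ross-xue} and are never made explicit, so you cannot actually enumerate the exceptional pairs $(N,k)$, let alone show they are exactly the complement of the range $k=12r$ or $k\geq 12r+4$. The thresholds in the conjecture are calibrated to the dimension condition $\dim S_k(\Gamma_0(1))\geq r$ at level one, not to where the asymptotic main term dominates; there is no reason the effective crossover point would land there, and for small $N$ and $k$ the error term genuinely can dominate.

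Second, and fatally for a complete proof, when $m$ is not a square the odd-indexed coefficient $c_{2r+1}$ has main term proportional to $c_1=-\Tr T'_m(N,k)$ (Theorem \ref{thm:m-nsq}), so its nonvanishing is \emph{conditional} on $\Tr T_m(N,k)\neq 0$ --- which is precisely Rouse's generalized Lehmer conjecture \cite[Conjecture 1.5]{rouse}, an open problem. Corollary \ref{cor:sgn-mnsq-odd} is stated only for $N$ with $\Tr T'_m(N,k)\neq 0$ and only for fixed $k$, and Rouse's Theorems 1.6 and 1.7 give ``all but finitely many $k$'' and ``$100\%$ of $N$,'' not all $(N,k)$. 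Your plan therefore proves at best the ``all but finitely many pairs'' statements that the paper already records in Section \ref{sec:conj-coeffs}; it cannot yield the full conjecture. The honest conclusion --- which your final paragraph essentially reaches --- is that the statement remains open, and the correct deliverable here is the list of partial verifications the paper gives, not a proof.
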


We note that these lower bounds on $k$ are the minimum possible. For any $k$ less than these bounds, we will have $\dim S_k(\Gamma_0(1)) < r$, and hence $c_r(m,1,k) = 0$, trivially. Even relaxing the lower bound on $k$ to just requiring that $\dim S_k(\Gamma_0(N)) \geq r$ will not work; Rouse \cite[Theorem 1.2]{rouse} showed that for any given $m$ and $k \in \{4,6,8,10,14\}$, $\Tr T_m(N,k) = 0$ for infinitely many $N$.

We now survey all the relevant previous results through the lens of Conjecture \ref{conj:gengenlehmer} (although they were not explicitly stated in these terms).
\begin{itemize}
    \item When $r\geq1$, $m=1$, Conjecture \ref{conj:gengenlehmer} follows from the fact that $\dim S_k(\Gamma_0(N)) \geq r$ for $k=12r$ and $k\geq 12r+4$.
    \item In 2006, when $r=1$ and $m$ is a non-square, Rouse \cite{rouse} showed Conjecture \ref{conj:gengenlehmer} for all but finitely many $k$, and for $100\%$ of $N$. 
    When $r=1, m=2$, he also completely verified Conjecture \ref{conj:gengenlehmer}. 
    \item In 2022, when $r=1$, $m=2$, Chiriac and Jorza  \cite{chiriac-jorza} verified Conjecture \ref{conj:gengenlehmer} in the case of $N=1$. 
    \item In 2023, when $r=2$, $m\geq2$, Clayton et al. \cite{clayton-et-al} showed Conjecture \ref{conj:gengenlehmer} in the case of $N=1$ for all but finitely many $k$. 
    When $r=2$, $m=2$, they also completely verified Conjecture \ref{conj:gengenlehmer}.  
    \item In 2023, when $r=1$, $m=3$, Chiriac et al.  \cite{chiriac} verified Conjecture \ref{conj:gengenlehmer} in the case of $N=1$. 
    \item In 2024, when $r=2$, $m\ge2$, we \cite{ross-xue}  showed Conjecture \ref{conj:gengenlehmer} for all but finitely many pairs $(N,k)$. When $r=2$, $m=3,4$, we also completely verified Conjecture \ref{conj:gengenlehmer}. 
    \item In 2024, when $r=2$, $m\ge2$, Cason et al. \cite{cason-et-al} showed a corresponding conjecture on the newspace $S_k^{\text{new}}(\Gamma_0(N))$ for all but finitely many pairs $(N,k)$. When $r=2,m=2,4$, they also completely verified the corresponding conjecture on the newspace. 
    \item In this paper, when $r\geq 1$ and $m$ is a square, Corollary \ref{cor:sgn-msq} proves Conjecture \ref{conj:gengenlehmer} for all but finitely many pairs $(N,k)$. 
    \item In this paper, when $r$ is even and $m$ is a non-square, Corollary \ref{cor:sgn-mnsq-even} proves Conjecture \ref{conj:gengenlehmer} for all but finitely many pairs $(N,k)$. 
    \item In this paper, when $r$ is odd and $m$ is a non-square, Corollary \ref{cor:sgn-mnsq-odd} shows that for $k$ fixed, if Conjecture \ref{conj:gengenlehmer} holds for $r=1$, then it also holds for each odd $r$ for all but finitely many $N$. In particular, combining with Rouse's result, this means that there exists a finite set $K$ such that: (1) for all $k \not\in K$, Conjecture \ref{conj:gengenlehmer} holds for all but finitely many $N$, and (2) even for $k \in K$, Conjecture \ref{conj:gengenlehmer} holds for $100\%$ of $N$.
\end{itemize}
We observe that the last result listed here essentially reduces the problem of studying odd-indexed coefficients $c_r$ to just studying the trace, $-c_1$.

\section*{Acknowledgements}
We would like to thank the anonymous referees for their helpful comments.

\bibliographystyle{plain}
\bibliography{bibliography.bib}

\end{document}